\documentclass[12pt]{article}
\usepackage{setspace}
\usepackage{amsmath,amssymb}
\usepackage{mathtools}
\usepackage{mathrsfs}
\usepackage{geometry}
\usepackage{enumitem}
\usepackage{environ}
\usepackage{setspace}
\usepackage{titlesec}
\usepackage[all,cmtip]{xy}
\usepackage{tikz}
\usetikzlibrary{matrix,arrows}

\usepackage[colorlinks=true,citecolor=blue]{hyperref}
\usepackage[amsthm,thmmarks]{ntheorem}

\NewEnviron{prf}[1][]{\begin{proof}[\bf #1Proof]\BODY\end{proof}}{}
\NewEnviron{slt}[1][]{\begin{proof}[\bf #1	解]\BODY\end{proof}}{}
\newtheorem{definition}{Definition}[section]
\newtheorem{theorem}[definition]{Theorem}
\newtheorem{lemma}[definition]{Lemma}
\newtheorem{proposition}[definition]{Proposition}
\newtheorem{remark}[definition]{Remark}

\newtheorem{notation}[definition]{Notation}

\newtheorem{interpretation}[definition]{Interpretation}

\newtheorem*{DML}{Dynamical Mordell--Lang Conjecture (DML Conjecture)}

\newcommand\address[1]{#1}
\newcommand\email[1]{\emph{Email address}: #1}
  
\setlist[enumerate,1]{label=(\roman*)}
\setlist[enumerate,2]{label=(\alph*)}

\title{\textbf{On the DML(1) property for regular endomorphisms of affine spaces: multiplicities of points at infinity}}
\author{She Yang\quad and\quad Aoyang Zheng}
\date{}

\begin{document}
\begin{spacing}{1.25}

\maketitle

\begin{abstract}
Let $f$ be a regular endomorphism of $\mathbb{A}_{\mathbb{C}}^N$ of algebraic degree $d$ and let $f_{\infty}$ be the induced endomorphism of the infinity hyperplane $H_{\infty}$. Suppose that for every periodic point $x_0\in H_{\infty}(\mathbb{C})$ of period $n_0$, the geometric mean of the multiplicities $e_{f_{\infty}}(x_0),\dots,e_{f_{\infty}}(f_{\infty}^{n_0-1}(x_0))$ is strictly less than $d$. Then we show that the dynamical Mordell--Lang conjecture for $f$ holds for curves in $\mathbb{A}_{\mathbb{C}}^N$.
\end{abstract}

\section{Introduction}

In this article, the base field is $\mathbb{C}$ unless otherwise stated. As a matter of convention, every variety is assumed to be integral, but the closed subvarieties can be reducible. Let $X$ be a variety and let $f$ be an endomorphism of $X$. For a point $x\in X$, the orbit $\mathcal{O}_{f}(x)$ is the set $\{f^{n}(x)\mid n\in\mathbb{N}\}$. We denote $\mathbb{N}=\mathbb{Z}_{+}\cup\{0\}$. An arithmetic progression is a set of the form $\{mk+l\mid k\in\mathbb{N}\}$ for some $m,l\in\mathbb{N}$.

The dynamical Mordell--Lang conjecture is one of the core problems in the field of arithmetic dynamics. It was proposed by Ghioca and Tucker in \cite{GT09} and can be stated as follows.

\begin{DML}
Let $f$ be an endomorphism of a quasi-projective variety $X$. Let $V$ be a closed subvariety of $X$ and let $x\in X(\mathbb{C})$ be a point. Then the return set $\{n\in\mathbb{N}\mid f^n(x)\in V(\mathbb{C})\}$ is a finite union of arithmetic progressions.
\end{DML}

There is an extensive literature on various cases of the DML conjecture. Two significant cases are as follows.

\begin{enumerate}
\item
If $f$ is an \'etale endomorphism of $X$, then the DML conjecture holds. See \cite{Bel06} and \cite[Theorem 1.3]{BGT10}.
\item
If $X = \mathbb{A}^2$, then the DML conjecture holds. See \cite{Xie17} and \cite[Theorem 4]{Xie}.
\end{enumerate}

One can consult \cite{BGT16,Xie} and the references therein for further known results.

In this paper, we mainly study the DML conjecture for curves. So we recall the following notion introduced in \cite[Definition 3]{Xie}.

\begin{definition}
For a quasi-projective variety $X$ and an endomorphism $f$ of $X$, we say $(X,f)$ satisfies the \emph{DML(1) property}, if for any closed subcurve $C\subseteq X$ and any point $x\in X(\mathbb{C})$, the return set $\{n\in \mathbb{N}\mid f^n(x)\in C(\mathbb{C})\}$ is a finite union of arithmetic progressions.
\end{definition}

Here ``1" stands for the dimension of the closed subvariety.

We study the DML(1) property of regular endomorphisms of affine spaces.

\begin{definition}\label{regendo}
A \emph{regular endomorphism} of $\mathbb{A}^N$ is a dominant endomorphism $f$ that can be extended to an endomorphism of $\mathbb{P}^N$. Here, we fix the standard coordinate system $[x_0,\dots,x_N]$ on $\mathbb{P}^N$ and naturally regard $\mathbb{A}^N$ as the open subset $\{x_0\neq0\}$ in it. Therefore, the regular endomorphism $f$ can be written as
$$
f(x_1,\dots,x_N)=(f_1(x_1,\dots,x_N)+g_1(x_1,\dots,x_N),\dots,f_N(x_1,\dots,x_N)+g_N(x_1,\dots,x_N)),
$$
in which $f_1(x_1,\dots,x_N),\dots,f_N(x_1,\dots,x_N)$ are homogeneous polynomials of degree $d\geq1$ that have no nonzero common zeros and $g_1(x_1,\dots,x_N),\dots,g_N(x_1,\dots,x_N)$ are polynomials of degree $<d$. We call $d$ the \emph{algebraic degree} of $f$.
\end{definition}

We fix the following notations that will be used throughout this article.

\begin{notation}\label{notation}
In the setting of Definition \ref{regendo}, we denote $H_{\infty}=\mathbb{P}^N\setminus\mathbb{A}^N$ as the infinity hyperplane. The endomorphism of $H_{\infty}$ induced by $f$ is denoted as $f_{\infty}$.

For an irreducible closed subcurve $C\subseteq\mathbb{A}^N$, we denote $\overline{C}$ as its closure in $\mathbb{P}^N$.

Sometimes we also regard $f$ as an endomorphism of $\mathbb{P}^N$. This should not cause any confusion.
\end{notation}

The concept of regular endomorphisms was introduced in \cite{BJ00} and had been studied in several previous works. For example, please see \cite[Subsections 1.4--1.5]{Xie24}, \cite{DFR}, \cite[Subsection 1.2]{Zhong}, and \cite[Subsection 1.2]{JXZ}. The study of the DML conjecture for regular endomorphisms goes back to Xie's groundbreaking work \cite{Xie17}. See also \cite{YZ,YZb}.

Now we state our main result. For a finite morphism $f:X\rightarrow Y$ between schemes and a point $x\in X$, we let the \emph{multiplicity} of $f$ at $x$ be the length of the Artin local ring $\mathcal{O}_{X,x}/f^*\mathfrak{m}_{f(x)}\mathcal{O}_{X,x}$ and denote this quantity as $e_f(x)$. Here $\mathfrak{m}_{f(x)}$ is the maximal ideal of $\mathcal{O}_{Y,f(x)}$.

\begin{theorem}\label{mainthm}
Let $f$ be a regular endomorphism of $\mathbb{A}^N$ of algebraic degree $d$. Suppose that for every periodic point $x_0\in H_{\infty}(\mathbb{C})$ of period $n_0$, the geometric mean of the multiplicities
$$
e_{f_{\infty}}(x_0),\dots,e_{f_{\infty}}(f_{\infty}^{n_0-1}(x_0))
$$
is strictly less than $d$. Then $f$ satisfies the DML(1) property.
\end{theorem}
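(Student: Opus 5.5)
We will follow the strategy of Xie's proof for $\mathbb{A}^2$. Fix an irreducible curve $C\subseteq\mathbb{A}^N$ and a point $x\in\mathbb{A}^N(\mathbb{C})$. Suppose the return set $\{n\mid f^n(x)\in C\}$ is infinite; we must show it is a finite union of arithmetic progressions. First we dispose of easy cases. If $x$ is preperiodic, the statement is trivial. If $C$ is $f$-periodic, say $f^m(C)=C$, then the return set is a finite union of arithmetic progressions by restricting $f^m$ to the normalization of $C$; the DML conjecture for curves is known. So we may assume the orbit is infinite and $C$ is not periodic, and argue by contradiction that the return set is finite. Since $f$ is finite and $\mathbb{A}^N$ is affine, an infinite orbit meeting $C$ infinitely often must escape to infinity along $C$. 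Concretely, Zariski density arguments and the $p$-adic method of Bell--Ghioca--Tucker, after spreading out over a finitely generated ring and embedding into $\mathbb{Z}_p$, handle the case where the orbit stays in a bounded $p$-adic region where $f$ is étale-like. Thus the essential case is when the points $f^{n}(x)\in C$ converge, along a subsequence, to a point $y\in\overline{C}\cap H_\infty$.

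The key step is a local analysis near $H_\infty$. Choose a height or valuation adapted to the situation: working over a finitely generated field, we pick an absolute value (archimedean or a suitable $p$-adic one) for which $f^{n}(x)\to H_\infty$. Since $\overline{C}\cap H_\infty$ is finite and $f_\infty$ is a finite endomorphism, infinitely many returns force some branch of $\overline{C}$ at a point $y\in H_\infty$ to be mapped, by iterates $f^{n_{i+1}-n_i}$, close to branches at points of $\overline{C}\cap H_\infty$. A pigeonhole argument then shows that $y$ may be taken periodic for $f_\infty$, with period $n_0$. Near $y$ we write $f^{n_0}$ in local coordinates $(t,z)$, where $t=0$ defines $H_\infty$. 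The $t$-coordinate behaves like $t\mapsto t^{d^{n_0}}\cdot(\text{unit})$, because $f$ has algebraic degree $d$ and extends to $\mathbb{P}^N$. The transverse coordinates $z$ are governed by $f_\infty^{n_0}$, whose local multiplicity at $y$ is $e=\prod_{j<n_0}e_{f_\infty}(f_\infty^j(y))$.

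We then compare rates. Orbit points approach $H_\infty$ at rate $|t_{k}|\approx|t_0|^{d^{n_0k}}$. Meanwhile, the transverse deviation $z$ from a branch of $\overline{C}$ can contract at most like $|z|^{e^k}$, up to the geometric bound coming from $e_{f_\infty}$: the local degree of $f_\infty^{n_0}$ controls the ramification in the $z$-direction. A point of $C$ near $y$ satisfies $|z|\lesssim|t|^{1/r}$, where $r$ is the branch's order of contact. The hypothesis $e<d^{n_0}$ then gives incompatible decay rates: a point of $C$ cannot map to a point close to $\overline{C}$ again infinitely often unless the branch is invariant, that is, unless $C$ is periodic. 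Formalizing this requires a local invariant-curve or Koenigs-type normal form for the superattracting direction transverse to $H_\infty$, and a Puiseux parametrization of the branches of $\overline{C}$.

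The main obstacle will be this local estimate: making rigorous that $f^{n}(C)$ cannot repeatedly meet $C$ near $y$ when $e<d^{n_0}$. This matters especially when $y$ is a critical point of $f_\infty$, so that $f_\infty^{n_0}$ is not locally invertible, and when $\overline{C}$ is tangent to $H_\infty$. I would first try to bound, via intersection multiplicities, $(f^{n}(\overline{C})\cdot\overline{C})_y$ growing like $e^{n/n_0}$. The degree of $f^n(\overline{C})$ grows like $d^n$, so the local intersection at infinity is too small to account for orbit points approaching infinity at the rate $d^n$. This yields finiteness of returns.
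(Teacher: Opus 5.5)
There is a genuine gap. The step carrying all the weight, the ``incompatible decay rates'' near a periodic point $y\in H_\infty$, is never established; you flag it yourself as the main obstacle. The heuristic you offer for it does not work. A local intersection number $(f^{n}(\overline{C})\cdot\overline{C})_y$ at a point of $H_\infty$ says nothing directly about the affine points $x_{n_{i+1}}=f^{m}(x_{n_i})\in C\cap f^{m}(C)$ where the returns actually happen. Turning ``the orbit approaches $H_\infty$ at rate $d^n$'' into a contradiction needs quantitative Diophantine control of how orbit points approach branches. In \cite{YZ} such rate comparisons are made rigorous only with Roth's theorem, and that is not how the theorem is obtained under the bare hypothesis $e<d^{n_0}$.

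Your preliminary steps are also unsupported:
\begin{enumerate}
\item The Bell--Ghioca--Tucker $p$-adic method does not dispose of a ``bounded'' case for non-\'etale $f$.
\item The claim that returns escape to infinity along one branch at one place needs the arithmetic fact that the orbit consists of $S$-integral points of unbounded height. Combined with Siegel's theorem, this is what restricts the geometry of $C$, and you never use it.
\item Your pigeonhole argument for periodicity of $y$ ignores that the gaps $n_{i+1}-n_i$ are unbounded while $f_\infty^{m}$ expands distances, so closeness to $y$ does not propagate. In the paper this periodicity is the nontrivial \cite[Theorem 1.3(i)]{YZ}.
\end{enumerate}

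What is missing is the structure the paper exploits. From infinitely many returns, one builds a backward chain $\cdots\to C_{-1}\to C_0=C$ in which every $C_{-n}$ contains infinitely many orbit points. Siegel's theorem together with \cite[Theorem 1.3]{YZb} (the $\mathbb{G}_m$ case) reduces to every $C_{-n}$ having normalization $\mathbb{A}^1$, hence a single branch at a single point $q_n$ at infinity; these points run through a cycle of $f_\infty$.

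Proposition~\ref{mainprop}, applied to $f^{n_0}$, is then proved by counting on $R=\widehat{\mathcal{O}_{\mathbb{P}^N,q}}$, not by rate estimates:
\begin{enumerate}
\item The normalized semivaluations $v_j=\mathrm{ord}_\infty\circ\pi_j^*/\deg\overline{C_{-j}}$ satisfy $v_{j+1}\circ\varphi=dv_j$.
\item The inequality $e\,v_{j+1}(\mathfrak{m})\ge\min\{1,dv_j(\mathfrak{m})\}$ and $e<d$ force $v_j(\mathfrak{m})\ge 1/e$.
\item Pass to limit semivaluations $w_i$ along a subnet. Since $R$ is generated by $de$ elements over $\varphi(R)$, the colengths $L_i(t)=\dim_{\mathbb{C}}R/\mathfrak{a}_t(w_i)$ satisfy $L_s(d^s)\le(de)^sL_0(1)$.
\item The contradiction comes from the lower bound $L_i(t)\ge t(t+1)/2$. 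It holds because infinitely many pairwise distinct curves have Zariski closure of dimension at least $2$. Hence two coordinates stay algebraically independent modulo $\{w_i=+\infty\}$, and every nonzero polynomial $P$ in them has $w_i(P)\le\deg P$.
\end{enumerate}
The growth comparison $d^{2s}$ versus $(de)^s$ is exactly where $e<d$ enters. Nothing in your local analysis of a single curve plays the role of this dimension-two bound, and that is the idea you would need.
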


\begin{remark}\label{rmk1}
\begin{enumerate}
\item
There are many regular endomorphisms satisfying the hypothesis in Theorem \ref{mainthm}. See \cite[Subsection 2.2]{YZ}.

\item
When we were typing this manuscript, A.I. claimed that it can strengthen Theorem \ref{mainthm} by changing the phrase ``strictly less than $d$" into ``not greater than $d$". If so, then this gives a (maybe) new proof of the DML conjecture for regular endomorphisms of $\mathbb{A}^2$. After comprehending the arguments in this article, the reader may realize that the remaining component is exactly ``the DML conjecture for regular endomorphisms of projective surfaces". We will try to comprehend and arrange its proof for this result later.
\end{enumerate}
\end{remark}

By taking \cite[Theorem 1.3(i)]{YZ} and \cite[Theorem 1.3]{YZb} into account, the key point is proving the following proposition.

\begin{proposition}\label{mainprop}
Let $f$ be a regular endomorphism of $\mathbb{A}^N$ of algebraic degree $d$. Let $q\in H_{\infty}(\mathbb{C})$ be a fixed point of $f_{\infty}$ such that $e_{f_{\infty}}(q)<d$. Let $(C_{-n})_{n\geq0}$ be a sequence of irreducible closed subcurves of $\mathbb{A}^N$ which satisfies $f(C_{-n})=C_{-n+1}$ for every $n\geq1$. Suppose that for each $n$, the normalization of $C_{-n}$ is isomorphic to $\mathbb{A}^1$ and the set-theoretic intersection $\overline{C_{-n}}\cap H_{\infty}=\{q\}$. Then there are only finitely many different curves in $(C_{-n})_{n\geq0}$ and hence all of them are $f$-periodic.
\end{proposition}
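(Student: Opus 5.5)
We will argue by degrees: it suffices to show that $\deg C_{-n}$ (as a curve in $\mathbb{P}^N$) stays bounded. Indeed, if the degrees are bounded, then since each $\overline{C_{-n}}$ passes through $q$ and has a unique place at infinity (its normalization is $\mathbb{A}^1$, so exactly one branch of $\overline{C_{-n}}$ lies over $q$), we bound the number of curves as follows. A bounded-degree family of curves with $f(C_{-n})=C_{-n+1}$ lies in finitely many components of the Chow variety, and the backward chain $C_{-n}$ consists of curves whose images under iterates of $f$ land on the fixed curve $C_0$. Since $f:\mathbb{P}^N\to\mathbb{P}^N$ is finite, $f^{-n}(C_0)$ has finitely many components for each $n$. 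If the degrees are bounded, only finitely many distinct curves of bounded degree can appear as components of $f^{-n}(C_0)$ for all $n$: this follows from a Northcott-type finiteness over a finitely generated field, or more simply from the observation that once some $C_{-m}=C_{-n}$ with $m>n$, the chain is periodic. So the goal is to rule out $\deg C_{-n}\to\infty$ along a subsequence.

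To control degrees, we will use the local branch at $q$. Let $\gamma_n$ be the unique branch of $\overline{C_{-n}}$ at $q$, parametrized by the point at infinity of $\mathbb{A}^1$. We have $\deg C_{-n}=(\overline{C_{-n}}\cdot H_\infty)$, and this equals the intersection multiplicity of $\gamma_n$ with $H_\infty$ at $q$, because there is only one branch and it lies over $q$. The projection formula gives
$$
d\cdot\deg C_{-n} \;=\; (f^*H_\infty\cdot \overline{C_{-n}}) \;=\; \deg(f|_{C_{-n}})\,\deg C_{-n+1},
$$
since $f^*H_\infty=dH_\infty$ as divisors. Hence $\deg C_{-n}=\deg C_0\prod_{k\le n}\deg(f|_{C_{-k}})/d^n$. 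The local degree of $f|_{\gamma_n}:\gamma_n\to\gamma_{n-1}$ can then be estimated in local coordinates at $q$. Write $f$ near $q$ as $(u,t)\mapsto(f_\infty(u)+O(t), t^d\cdot\text{unit})$, where $t$ is a local equation of $H_\infty$ and $u$ are coordinates on $H_\infty$. The key local claim is that along a branch meeting $H_\infty$ only at $q$, the ramification of $f|_{\gamma}$ is controlled by $e_{f_\infty}(q)$. More precisely, we will show that $\deg(f|_{C_{-n}})$ is at most $e_{f_\infty}(q)$ times a contribution bounded in terms of how tangent $\gamma_n$ is to $H_\infty$. Combined with $e_{f_\infty}(q)<d$, this will force $\deg C_{-n}$ to decrease geometrically unless bounded.

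The main obstacle is this local estimate: relating $\deg(f|_{\gamma_n})$, equivalently the ratio of the $t$-order to the $u$-order of the branch, to $e_{f_\infty}(q)$. I would try the following valuative approach. Let $v_n$ be the curve valuation at $q$ given by $\gamma_n$, normalized so that $v_n(t)=\deg C_{-n}$. We have $f_*v_n=\deg(f|_{\gamma_n})\,v_{n-1}$ and $v_n(f^*t)=d\,v_n(t)$. Comparing with $v_n(f^*\mathfrak{m}_q)\le e_{f_\infty}(q)\cdot v_n(\mathfrak{m}_q)$ (in the regime where $t$ has larger order than the $u_i$), we get $\min_i v_{n-1}(u_i)\le\frac{e}{\deg(f|)}\min_i v_n(u_i)$, while $v_{n-1}(t)=\frac{d}{\deg(f|)}v_n(t)$. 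So the ratio $\rho_n=v_n(t)/\min_i v_n(u_i)$ satisfies $\rho_{n-1}\ge (d/e)\,\rho_n$. Pulling back, this gives $\rho_n\le (e/d)^n\rho_0\to0$. But $\rho_n\ge 1/\min_i v_n(u_i)$ is not the right bound, so I would instead use that $\rho_n$ is bounded below by a positive constant, since the branch is transverse or tangent to $H_\infty$ with $t$-order at least $1$ relative to the multiplicity. This bound still needs checking, and it is the place where I expect to have to choose the ratio's normalization (for instance using the multiplicity of the branch) carefully. A contradiction, or else a degree bound, follows; finiteness then follows as in the first paragraph.
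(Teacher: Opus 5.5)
There is a genuine gap at the core: the local estimate at $q$ cannot give either a degree bound or a contradiction. From $v_n\circ f^*=a_nv_{n-1}$ and $v_n(f^*t)=dv_n(t)$ you learn only $a_n=d\deg C_{-n}/\deg C_{-n+1}$. After normalizing $\hat v_n=v_n/\deg C_{-n}$, the relation becomes $\hat v_n\circ f^*=d\,\hat v_{n-1}$, in which $a_n$ no longer appears. Your ratio $\rho_n$ is scale invariant too, so nothing computed at $q$ can see the degrees.

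Moreover, $v_n(f^*\mathfrak m_q)\le e\,v_n(\mathfrak m_q)$ is false in general, since $f^*\mathfrak m_q$ has colength $de$. The usable inequality comes from $\mathfrak m^e\subseteq(t,f^*u_1,\dots,f^*u_{N-1})$, an ideal of colength $e$, and reads $e\,\hat v_n(\mathfrak m)\ge\min\{1,d\,\hat v_{n-1}(\mathfrak m)\}$. So $\rho_{n-1}\ge(d/e)\rho_n$ is available only when $\rho_{n-1}\ge d$. Since $\rho_n\ge1$ for non-vertical curves, iterating shows merely that $\rho_n\le e$, i.e. $\hat v_n(\mathfrak m)\ge 1/e$, for large $n$. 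That is Lemma \ref{evolve}(ii), a preliminary step only.

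Your chain also never uses that the $C_{-n}$ are distinct, yet constant sequences satisfy all hypotheses. For $d\ge2$, $f(X,Y)=(XY^{d-1}-Y^{d-1}+1,\,X^d+Y^d)$ is regular and fixes $q=[0:0:1]$ with $e_{f_\infty}(q)=1<d$. It preserves the line $\{X=1\}$, which is transverse to $H_\infty$ at $q$, yet has $a_n=d>e$ and $\rho\equiv1$. So $a_n$ is not controlled by $e$ and the tangency, and no purely local argument of this kind can end in a contradiction.

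The reduction in your first paragraph also fails, because bounded degree does not give finiteness. Take $f(x_1,x_2)=(x_1^d,x_2^d)$ and the lines $C_{-n}=\{x_1=e^{2\pi i/d^n}\}$. They have degree $1$ and normalization $\mathbb{A}^1$, meet $H_\infty$ only at $q=[0:0:1]$, satisfy $f(C_{-n})=C_{-n+1}$, and are pairwise distinct (here $e_{f_\infty}(q)=d$). Northcott does not apply, since the fields of definition of the $C_{-n}$ have unbounded degree. The remark about $C_{-m}=C_{-n}$ assumes the conclusion.

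What is missing is a global argument that uses distinctness. The paper takes a limit $w_i=\lim_\alpha\hat v_{n_\alpha+i}$ along a convergent subnet.
\begin{enumerate}
\item Distinctness forces the polynomials $P\in A$ with $w_i(P)=+\infty$ to form a prime $\mathfrak p_i$ with $\dim A/\mathfrak p_i\ge2$.
\item Combined with $w_i(P)\le\deg P$, this gives the quadratic lower bound $L_i(t)=\dim_{\mathbb C}R/\mathfrak a_t(w_i)\ge t(t+1)/2$, where $\mathfrak a_t(w_i)=\{h\in R: w_i(h)\ge t\}$.
\item On the other hand, $R$ is generated by $de$ elements over $f^*R$ (Lemma \ref{fingen}) and $w_{i+1}\circ f^*=d\,w_i$, so $L_{i+1}(dt)\le de\,L_i(t)$.
\item Hence $d^{2s}/2\le(de)^sL_0(1)$, which is impossible for large $s$ since $e<d$.
\end{enumerate}
The bound $\hat v_n(\mathfrak m)\ge1/e$ that your computation reaches serves there only to make $L_i(t)$ finite.
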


\begin{remark}
\begin{enumerate}
\item
The interested reader may check that the proof of Proposition \ref{mainprop} remains valid for arbitrary characteristic (algebraically closed) base fields. But in positive characteristic, the consequence of Proposition \ref{mainprop} for the DML problem of regular endomorphisms is limited as Siegel's theorem fails. See \cite[Section 5]{YZ} for more about this.

\item
Contrary to Remark \ref{rmk1}(ii), it is not hard to see that the hypothesis ``$e_{f_{\infty}}(q)<d$" in Proposition \ref{mainprop} is optimal. The conclusion no longer holds when $e_{f_{\infty}}(q)=d$.
\end{enumerate}
\end{remark}

The structure of this article is as follows. We make some preparations in Section \ref{Sec2}, and then prove the main results in Section \ref{Sec3}. The first half of the proof of Proposition \ref{mainprop} investigates the branches of the curves at infinity, which has a close connection to our arguments in \cite{YZ}. We address more about this in the Appendix.

\textbf{Statement on A.I. use.} The proof of Proposition \ref{mainprop} is learned from ChatGPT 6 Astra. We feel excited that in the first half of its proof (i.e. materials in subsection \ref{sec2.2}), A.I. clarifies some feelings that we had in the work \cite{YZ} but did not be able to fully explain. These thoughts are now summarized in the Appendix. On the contrary, we have no idea about how did A.I. come up with the arguments in the second half of its proof (i.e. arguments in subsection \ref{sec3.1}). Let us just watch its show.

We clear up the original text generated by A.I. and type this manuscript.

\section{Preparations}\label{Sec2}

In this section, we make some preparations for the proof of Proposition \ref{mainprop}. In subsection \ref{sec2.1}, we do some length calculations of local algebra. In subsection \ref{sec2.2}, we introduce some semivaluations corresponding to branches of curves at infinity and describe the relationship of them with the local intersection multiplicities of branches with effective divisors (see Lemma \ref{tautlem} and Definition \ref{locintnum}). These semivaluations shall play a crucial role in the proof of Proposition \ref{mainprop}. In subsection \ref{sec2.3}, we make a preparation about the ``convergent subnets" of sequences in compact topological spaces, which is a technical notion that will be used later.

We proceed in the setting of Proposition \ref{mainprop}.

\subsection{Some length computations in local algebra}\label{sec2.1}

We mainly aim to establish these two results in this subsection. Firstly, we have $e_f(q)=de_{f_{\infty}}(q)$. Secondly, the natural map $f^*\colon\widehat{\mathcal{O}_{\mathbb{P}^N,q}}\to\widehat{\mathcal{O}_{\mathbb{P}^N,q}}$ makes the complete local ring $\widehat{\mathcal{O}_{\mathbb{P}^N,q}}$ into a finite $f^*(\widehat{\mathcal{O}_{\mathbb{P}^N,q}})$-module that can be generated by $e_f(q)$ elements. These results are easy for experts familiar with commutative algebra. We include the proofs for completeness.

We fix some notations that will be used repeatedly in the following.

\begin{notation}\label{notation2}
In order to prove Proposition \ref{mainprop}, we can and do assume $q=[0,0,\dots,0,1]$. We abbreviate $e=e_{f_{\infty}}(q)$.

We rename the coordinates $[x_0,\dots,x_N]$ of $\mathbb{P}^N$ into $[u,z_1,\dots,z_N]$ to emphasize the special status of ``$x_0$" and respect the original notation of A.I.. We write $U_N$ as the affine chart $\{z_N\neq0\}$.

We have the following natural isomorphisms that are compatible with each other
$$
A\coloneqq\mathbb{C}[u,z_1,\dots,z_{N-1}]\cong\Gamma(U_N,\mathcal{O}_{\mathbb{P}^N}),
$$
$$
A_{\mathfrak{m}}=\mathbb{C}[u,z_1,\dots,z_{N-1}]_{(u,z_1,\dots,z_{N-1})}\cong\mathcal{O}_{\mathbb{P}^N,q},
$$
$$
R\coloneqq\mathbb{C}[[u,z_1,\dots,z_{N-1}]]\cong\widehat{\mathcal{O}_{\mathbb{P}^N,q}}.
$$
Here $\mathfrak{m}$ denotes that maximal ideal of $A$, and we always do completions of local rings with respect to their maximal ideals. We may identify these isomorphic objects.

The regular endomorphism $f$ can be written as
$$
[u,z_1,\dots,z_N]\mapsto[u^d,f_1(z_1,\dots,z_N)+ug_1(u,z_1,\dots,z_N),\dots,f_N(z_1,\dots,z_N)+ug_N(u,z_1,\dots,z_N)]
$$
where $f_1,\dots,f_N$ are homogeneous of degree $d$ and $g_1,\dots,g_N$ are homogeneous of degree $d-1$. The term ``$z_N^d$" only exists in $f_N$ and does not appear in $f_1,\dots,f_{N-1}$.

For $1\leq i\leq N$, we write $G_i(u,z_1,\dots,z_{N-1})=f_i(z_1,\dots,z_{N-1},1)+ug_i(u,z_1,\dots,z_{N-1},1)$. Then only $G_N$ has a nonzero constant term. Both of the pull-back $\mathbb{C}$-algebra endomorphisms $f^*$ of $\mathcal{O}_{\mathbb{P}^N,q}$ and $\widehat{\mathcal{O}_{\mathbb{P}^N,q}}$ are given by
$$
(u,z_1,\dots,z_{N-1})\mapsto\left(\frac{u^d}{G_N(u,z_1,\dots,z_{N-1})},\frac{G_1(u,z_1,\dots,z_{N-1})}{G_N(u,z_1,\dots,z_{N-1})},\dots,\frac{G_{N-1}(u,z_1,\dots,z_{N-1})}{G_N(u,z_1,\dots,z_{N-1})}\right).
$$

We denote $\varphi=f^*\colon R\to R$.
\end{notation}

The residue field of every Artin local ring appeared in the sequel is isomorphic to $\mathbb{C}$, hence we recall the fact that their lengths are the same as their dimensions as complex linear spaces. 

\begin{lemma}\label{multicalc}
We have $e_f(q)=de$.
\end{lemma}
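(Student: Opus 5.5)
We compute $e_f(q)$ directly from its definition. By Notation \ref{notation2}, $e_f(q)$ is the length of $A_{\mathfrak{m}}/f^*\mathfrak{m}A_{\mathfrak{m}}$, i.e. of
$$
\mathbb{C}[u,z_1,\dots,z_{N-1}]_{\mathfrak{m}}\big/\bigl(u^d/G_N,\;G_1/G_N,\dots,G_{N-1}/G_N\bigr).
$$
Since $G_N$ has nonzero constant term, it is a unit in $A_{\mathfrak{m}}$. So this ideal equals $(u^d,G_1,\dots,G_{N-1})$. Modulo $u^d$ each $G_i$ still involves $u$, and the plan is to compare this quotient with a filtration by powers of $u$.

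The key step is to show that the quotient is a free $\mathbb{C}[u]/(u^d)$-module whose fiber at $u=0$ is the local ring of $f_\infty$ at $q$. We have $H_\infty=\{u=0\}$, and $f_\infty^*$ near $q$ is given by $z_i\mapsto f_i(z_1,\dots,z_{N-1},1)/f_N(z_1,\dots,z_{N-1},1)$. Hence
$$
e=\dim_{\mathbb{C}}\mathbb{C}[z_1,\dots,z_{N-1}]_{(z)}\big/\bigl(\bar G_1,\dots,\bar G_{N-1}\bigr),
\qquad \bar G_i=G_i|_{u=0}=f_i(z,1),
$$
where we again discard the unit $\bar G_N$. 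Set $B=A_{\mathfrak{m}}/(G_1,\dots,G_{N-1})$. Then our quotient is $B/u^dB$, and $B/uB$ has length $e<\infty$.

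Since $B/uB$ has finite length, $G_1,\dots,G_{N-1},u$ form a system of parameters of the $N$-dimensional regular local ring $A_{\mathfrak{m}}$. A regular local ring is Cohen--Macaulay, so this system of parameters is a regular sequence. Consequently $u$ is a nonzerodivisor on $B$. Multiplication by $u^k$ then induces isomorphisms $B/uB\cong u^kB/u^{k+1}B$ for $0\le k\le d-1$. The filtration $B\supseteq uB\supseteq\cdots\supseteq u^dB$ therefore gives
$$
\ell(B/u^dB)=\sum_{k=0}^{d-1}\ell(u^kB/u^{k+1}B)=d\,e.
$$

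The main obstacle is justifying that $u$ is a nonzerodivisor on $B$, i.e. the Cohen--Macaulay / regular-sequence step, together with checking carefully that the ring $B/uB$ really computes $e_{f_\infty}(q)$. For the latter we need to identify $\mathcal{O}_{H_\infty,q}$ with $\mathbb{C}[z_1,\dots,z_{N-1}]_{(z)}$ and check the unit $f_N(z,1)$, which is nonzero at $q$ because of the $z_N^d$ term. Everything else is routine length bookkeeping. One could equivalently pass to the completion $R$, since lengths of Artinian quotients are unchanged by completion.
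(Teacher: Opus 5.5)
Your proposal is correct and follows essentially the same route as the paper. Both arguments reduce $e_f(q)$ to the length of $A_{\mathfrak{m}}/(u^d,G_1,\dots,G_{N-1})$, use Cohen--Macaulayness of the regular local ring $A_{\mathfrak{m}}$ (the paper cites Hartshorne II.8.21A(c)) to see that $u$ is a nonzerodivisor on $S=A_{\mathfrak{m}}/(G_1,\dots,G_{N-1})$, and then sum the lengths of the $d$ successive quotients $u^aS/u^{a+1}S\cong S/uS$, each of length $e$; the only cosmetic difference is that the paper applies the regular-sequence criterion to $G_1,\dots,G_{N-1},u^d$ rather than to $G_1,\dots,G_{N-1},u$.
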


\begin{proof}
By definition, $e_f(q)$ is the length of the Artin local ring $A_{\mathfrak{m}}/(u^d,G_1,\dots,G_{N-1})$. We can also calculate that $e$ is the length of $A_{\mathfrak{m}}/(u,G_1,\dots,G_{N-1})$. According to \cite[(II, 8.21A(c))]{Har77}, we see that $G_1,\dots,G_{N-1},u^d$ form a regular sequence in $A_{\mathfrak{m}}$. In particular, $u^d$ is not a zero divisor in $S\coloneqq A_{\mathfrak{m}}/(G_1,\dots,G_{N-1})$. As a result, every successive quotient $u^aS/u^{a+1}S$ is isomorphic to $S/uS$ as an $S$-module via multiplication by $u^a$. This finishes the proof.
\end{proof}

Since Artin local rings are complete, we have isomorphisms
$$
A_{\mathfrak{m}}/(u^d,G_1,\dots,G_{N-1})\cong R/(u^d,G_1,\dots,G_{N-1}),\quad A_{\mathfrak{m}}/(u,G_1,\dots,G_{N-1})\cong R/(u,G_1,\dots,G_{N-1})
$$
and the lengths of these rings are $de$ and $e$, respectively.

\begin{lemma}\label{fingen}
The morphism $\varphi\colon R\to R$ is finite. More precisely, $R$ can be generated by $de$ elements as a $\varphi(R)$-module.
\end{lemma}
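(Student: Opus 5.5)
The plan is to apply the complete version of Nakayama's lemma for local homomorphisms of complete local rings (e.g. Matsumura, Theorem 8.4). First, $\varphi$ is a local homomorphism $R\to R$. The images
$$
\varphi(u)=u^d/G_N,\qquad \varphi(z_i)=G_i/G_N\quad(1\leq i\leq N-1)
$$
all lie in the maximal ideal $\mathfrak{n}=(u,z_1,\dots,z_{N-1})$ of $R$, because $G_N$ is a unit in $R$ (it has nonzero constant term) while $G_1,\dots,G_{N-1}$ and $u^d$ have no constant term. So $\varphi$ is well defined and continuous for the $\mathfrak{n}$-adic topology.

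Next, I will compute the extended ideal $\varphi(\mathfrak{n})R$. Since $G_N$ is a unit,
$$
\varphi(\mathfrak{n})R=(u^d/G_N,\,G_1/G_N,\dots,G_{N-1}/G_N)R=(u^d,G_1,\dots,G_{N-1})R.
$$
By the isomorphisms recorded right after Lemma \ref{multicalc}, together with Lemma \ref{multicalc} itself, the quotient $R/\varphi(\mathfrak{n})R=R/(u^d,G_1,\dots,G_{N-1})$ has length $de$. Hence its dimension as a $\mathbb{C}$-vector space is $de$. Since $\varphi(R)/\varphi(\mathfrak{n})\cong\mathbb{C}$, this quotient is a $de$-dimensional vector space over the residue field of $\varphi(R)$.

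Now choose $r_1,\dots,r_{de}\in R$ whose images form a $\mathbb{C}$-basis of $R/\varphi(\mathfrak{n})R$. I will invoke the standard fact: if $\psi\colon(S,\mathfrak{n}_S)\to(T,\mathfrak{n}_T)$ is a local homomorphism, $S$ is $\mathfrak{n}_S$-adically complete, $M$ is a $T$-module that is separated for the $\mathfrak{n}_S$-adic topology, and $M/\mathfrak{n}_SM$ is generated by the images of $m_1,\dots,m_k$, then $m_1,\dots,m_k$ generate $M$ over $S$. Apply this with $S=T=R$, $\psi=\varphi$ and $M=R$.

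The separatedness hypothesis is the one point needing care. Because $R/\varphi(\mathfrak{n})R$ is Artinian, the ideal $\varphi(\mathfrak{n})R$ is $\mathfrak{n}$-primary. So $\mathfrak{n}^k\subseteq\varphi(\mathfrak{n})R$ for some $k$, and the $\varphi(\mathfrak{n})R$-adic topology on $R$ coincides with the $\mathfrak{n}$-adic one. Krull's intersection theorem then gives $\bigcap_j\varphi(\mathfrak{n})^jR=0$, which is the required separatedness.

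The lemma then follows: the $r_i$ generate $R$ as a $\varphi(R)$-module, which gives finiteness with $de$ generators. If one prefers to avoid citing the general theorem, the usual successive-approximation argument works directly. One writes any $x\in R$ as $x=\sum_i\varphi(a_i^{(0)})r_i+y_1$ with $y_1\in\varphi(\mathfrak{n})R$, iterates to get convergent series $a_i=\sum_j a_i^{(j)}$ in the complete ring $R$ (using $a_i^{(j)}\in\mathfrak{n}^j$), and uses continuity of $\varphi$ to pass to the limit. I expect this convergence and separatedness bookkeeping to be the only real obstacle, and it is routine.
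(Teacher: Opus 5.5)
Your proposal is correct and follows essentially the same route as the paper. The paper picks a $\mathbb{C}$-basis $h_1,\dots,h_{de}$ of $R/J$ with $J=\varphi(\mathfrak{n})R=(u^d,G_1,\dots,G_{N-1})$ and runs exactly the successive-approximation argument you give as a fallback, with polynomial corrections $b_{a,n+1}-b_{a,n}\in\mathfrak{n}^n$; this is a hand-rolled complete Nakayama lemma, and your explicit check that $\bigcap_n J^n=0$ fills in a separatedness step the paper leaves implicit (that check only needs $J\subseteq\mathfrak{n}$ and Krull, not the $\mathfrak{n}$-primary property).
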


\begin{proof}
Let $\mathfrak{m}$ be the maximal ideal of $R$ and write $J=\varphi(\mathfrak{m})R=(u^d,G_1,\dots,G_{N-1})$. Then $de$ is the length of $R/J$ and hence is also its dimension as a complex linear space. Find $h_1,\dots,h_{de}\in R$ such that they form a $\mathbb{C}$-basis in $R/J$. Fix an element $h\in R$. We can find sequences of polynomials $(b_{1,n})_{n\geq1},\dots,(b_{de,n})_{n\geq1}\in A^{\mathbb{Z}_+}$ such that
$$
h-\sum\limits_{a=1}^{de}h_a\varphi(b_{a,n})\in J^n\quad\text{and}\quad b_{a,n+1}-b_{a,n}\in\mathfrak{m}^n
$$
hold for every $n$. For $1\leq a\leq de$, let $b_a\in R$ be the limit of the sequence $(b_{a,n})_{n\geq1}$. Then we have $h=\sum\limits_{a=1}^{de}h_a\varphi(b_a)$. This finishes the proof.
\end{proof}

\subsection{Normalized semivaluations on the curve branches}\label{sec2.2}

By a \emph{semivaluation} on a $\mathbb{C}$-algebra $R$, we mean a map $v\colon R\to\mathbb{R}_{\geq0}\cup\{+\infty\}$ such that
\begin{enumerate}
\item
$v(0)=+\infty$ and $v(\mathbb{C}^{\times})=\{0\}$,
\item
$v(xy)=v(x)+v(y)$ for every $x,y\in R$, and
\item
$v(x+y)\geq\min\{v(x),v(y)\}$ for every $x,y\in R$.
\end{enumerate}

In the setting of Proposition \ref{mainprop}, we have a sequence of curves $(C_{-j})_{j\geq0}$. In the following, we associate a semivaluation on $R=\widehat{\mathcal{O}_{\mathbb{P}^N,q}}$ (see Notation \ref{notation2}) for every $j$.

Since the normalization of $C_{-j}$ is isomorphic to $\mathbb{A}^1$, the normalization of $\overline{C_{-j}}$ is isomorphic to $\mathbb{P}^1$. Let $\pi_j\colon\mathbb{P}^1\to\overline{C_{-j}}$ be the normalization map that sends $\infty$ to $q$. Then $\infty$ is the unique element in $\pi_j^{-1}(q)$. We regard $\pi_j$ as a morphism from $\mathbb{P}^1$ to $\mathbb{P}^N$ by composing with the closed immersion $\overline{C_{-j}}\hookrightarrow\mathbb{P}^N$. Then we get a pull-back morphism
$$
\pi_j^*\colon R=\widehat{\mathcal{O}_{\mathbb{P}^N,q}}\to\widehat{\mathcal{O}_{\mathbb{P}^1,\infty}}.
$$

The semivaluation $v_j$ will be defined as a scalar multiple of the function $\mathrm{ord}_{\infty}\circ\pi_j^*$ (see Definition \ref{semival}). We first investigate this function on the subring $A=\mathbb{C}[u,z_1,\dots,z_{N-1}]$ of $R$.

\begin{lemma}\label{tautlem}
Let $P\in A$ be a polynomial. We regard $P$ as a global section of the line bundle $\mathcal{O}(\deg P)$ on $\mathbb{P}^N$, and get the global section $\pi_j^*P$ of the line bundle $\pi_j^*\mathcal{O}(\deg P)$ on $\mathbb{P}^1$. Then $\mathrm{ord}_{\infty}(\pi_j^*P)$ is the vanishing order of $\pi_j^*P$ at $\infty$. Here the first ``$\pi_j^*P$" is an element in $\widehat{\mathcal{O}_{\mathbb{P}^1,\infty}}$ and the second one is that global section.
\end{lemma}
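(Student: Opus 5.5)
The plan is to compare the two objects through a local trivialization of $\mathcal{O}(\deg P)$ near $q$. Write $k=\deg P$. Here $P\in A=\mathbb{C}[u,z_1,\dots,z_{N-1}]$ is a polynomial in the affine coordinates of the chart $U_N$. Viewed as a global section of $\mathcal{O}(k)$ on $\mathbb{P}^N$, it is the homogeneous polynomial $\widetilde P(u,z_1,\dots,z_N)=z_N^kP(u/z_N,z_1/z_N,\dots,z_{N-1}/z_N)$. On $U_N=\{z_N\neq0\}$ the section $z_N^k$ is nowhere vanishing and trivializes $\mathcal{O}(k)|_{U_N}$. Under this trivialization, $\widetilde P$ corresponds to the regular function $\widetilde P/z_N^k=P$ on $U_N$, which is exactly the element $P\in A\subseteq A_{\mathfrak m}\subseteq R$ via the identifications of Notation \ref{notation2}.

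Next I pull back along $\pi_j$. Since $\pi_j(\infty)=q\in U_N$, there is an open neighbourhood $W$ of $\infty$ in $\mathbb{P}^1$ with $\pi_j(W)\subseteq U_N$. On $W$, the section $\pi_j^*z_N^k$ trivializes $\pi_j^*\mathcal{O}(k)$: it is nowhere vanishing because $z_N^k$ is. In this trivialization, $\pi_j^*\widetilde P=(\pi_j^*P)\cdot\pi_j^*z_N^k$, where $\pi_j^*P$ is now the regular function $P\circ\pi_j$ on $W$.

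The vanishing order of a section of a line bundle at a point is, by definition, the order at that point of the function representing it in any local trivialization. This is independent of the trivialization, since two trivializations differ by a unit. So the vanishing order of $\pi_j^*\widetilde P$ at $\infty$ equals $\mathrm{ord}_\infty$ of the function $P\circ\pi_j\in\mathcal{O}_{\mathbb{P}^1,\infty}$.

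The last step is compatibility with completion. The map $\pi_j^*\colon R\to\widehat{\mathcal{O}_{\mathbb{P}^1,\infty}}$ is the completion of the local map $\mathcal{O}_{\mathbb{P}^N,q}\to\mathcal{O}_{\mathbb{P}^1,\infty}$, so its restriction to $A$ is $P\mapsto P\circ\pi_j$ followed by the inclusion into the completion. The valuation $\mathrm{ord}_\infty$ on the DVR $\mathcal{O}_{\mathbb{P}^1,\infty}$ extends to its completion $\mathbb{C}[[t]]$ with $t=1/s$, and the extension agrees with the original on elements of $\mathcal{O}_{\mathbb{P}^1,\infty}$. This gives the claim. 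The same argument shows that if $\pi_j^*P$ vanishes identically, both sides are $+\infty$.

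The only point needing care, and the main "obstacle", is bookkeeping: making sure the identification $A\cong\Gamma(U_N,\mathcal{O}_{\mathbb{P}^N})$ is the dehomogenization with respect to $z_N$, which holds because $q\in U_N$. Otherwise the proof is a tautology, as the lemma's name suggests.
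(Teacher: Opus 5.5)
Your argument is correct and is exactly the tautological verification the paper leaves to the reader. You trivialize $\mathcal{O}(\deg P)$ near $q$ by $z_N^{\deg P}$, so that the homogenized section corresponds to $P\in A$; you pull back along $\pi_j$ on a neighbourhood of $\infty$; and you use that $\mathrm{ord}_\infty$ on $\mathcal{O}_{\mathbb{P}^1,\infty}$ is compatible with passing to the completion.
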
 

We leave the tautological proof of this lemma to the reader.

Let $d_j$ be the degree of the curve $\overline{C_{-j}}$ in $\mathbb{P}^N$.

\begin{definition}\label{semival}
For every $j\geq0$, we define the semivaluation $v_j\colon R\to\mathbb{R}_{\geq0}\cup\{+\infty\}$ by
$$
v_j(h)=\frac{\mathrm{ord}_{\infty}(\pi_j^*h)}{d_j}
$$
for $h\in R$.
\end{definition}

We collect some basic facts.

\begin{lemma}\label{basic}
\begin{enumerate}
\item
For $h\in R$, we have $v_j(h)=0$ if and only if $h\in R^{\times}$.

\item
For $P\in A$, we have $v_j(P)\in[0,\deg P]\cup\{+\infty\}$. Moreover, $v_j(P)=+\infty$ if and only if $P$ vanishes on $\overline{C_{-j}}\cap U_N$, where $U_N$ is the affine chart $\{z_N\neq0\}$ (see Notation \ref{notation2}).

\item
We have $v_j(u)=1$.
\end{enumerate}
\end{lemma}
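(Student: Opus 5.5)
The plan is to reduce all three claims to local computations at the single point $\infty\in\mathbb{P}^1$. The inputs are the definition $v_j=\mathrm{ord}_{\infty}\circ\pi_j^*/d_j$, Lemma \ref{tautlem}, and the fact that $\pi_j^{-1}(q)=\{\infty\}$. Throughout, $\pi_j^*\colon R\to\widehat{\mathcal{O}_{\mathbb{P}^1,\infty}}\cong\mathbb{C}[[t]]$ is a local homomorphism of complete local rings. Indeed, it is induced by the local homomorphism $\mathcal{O}_{\mathbb{P}^N,q}\to\mathcal{O}_{\mathbb{P}^1,\infty}$, which is local because $\pi_j(\infty)=q$.

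For (i), suppose first that $h\in R^{\times}$. Then $\pi_j^*h$ is a unit in $\mathbb{C}[[t]]$, so its order is $0$ and $v_j(h)=0$. Conversely, suppose $h\notin R^{\times}$, so that $h\in\mathfrak{m}_R$. Since $\pi_j^*$ is local, $\pi_j^*h\in t\mathbb{C}[[t]]$, which gives $\mathrm{ord}_\infty\geq1$ and hence $v_j(h)\geq 1/d_j>0$.

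For (ii), let $P\in A$ and put $D=\deg P$. Regard $P$ as a section of $\mathcal{O}(D)$ on $\mathbb{P}^N$; by Lemma \ref{tautlem}, $\mathrm{ord}_\infty(\pi_j^*P)$ is the vanishing order at $\infty$ of the section $\pi_j^*P$ of $\pi_j^*\mathcal{O}(D)$.

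There are two cases. If this section is not identically zero, its total number of zeros on $\mathbb{P}^1$ equals $\deg\pi_j^*\mathcal{O}(D)$. Since $\pi_j$ is birational onto $\overline{C_{-j}}$, this degree is $D\cdot\deg\overline{C_{-j}}=Dd_j$. The vanishing order at $\infty$ is at most the total, so $v_j(P)\le D$.

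If the section is identically zero, then $v_j(P)=+\infty$. This happens exactly when the homogeneous polynomial $P$ vanishes on all of $\overline{C_{-j}}$. Now $P$ is a polynomial in $u,z_1,\dots,z_{N-1}$, i.e.\ the dehomogenization of its homogenization in $z_N$. Because $\overline{C_{-j}}$ is irreducible and meets the dense open set $U_N$ (it contains $q$), vanishing on $\overline{C_{-j}}$ is equivalent to vanishing on $\overline{C_{-j}}\cap U_N$.

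The only point needing care here is the homogeneity convention. As an element of $A\subset R$, $P$ corresponds to the section $z_N^{D}\cdot P(u/z_N,\dots)$, that is, to its homogenization of degree $D$. This is legitimate because $\deg P$ is computed in the variables $u,z_1,\dots,z_{N-1}$. On $U_N$ the trivialization by $z_N^D$ matches the local function with the section, so the two notions of order agree; this is the content of Lemma \ref{tautlem}.

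For (iii), take $P=u$, so $D=1$. The section $u$ of $\mathcal{O}(1)$ vanishes exactly on $H_\infty$. Since $\overline{C_{-j}}\cap H_\infty=\{q\}$ set-theoretically and $\pi_j^{-1}(q)=\{\infty\}$, the section $\pi_j^*u$ has its only zero at $\infty$. It is not identically zero, because $C_{-j}\subseteq\mathbb{A}^N$ is nonempty. Therefore all $d_j$ zeros of $\pi_j^*u$ are concentrated at $\infty$, giving $\mathrm{ord}_\infty(\pi_j^*u)=d_j$ and $v_j(u)=1$.

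I expect the main obstacle to be the correct identification of degrees in (ii) and (iii). That step uses $\deg\pi_j^*\mathcal{O}(1)=d_j$ for the normalization map, which holds because $\pi_j$ is birational onto its image. It also uses Lemma \ref{tautlem} to translate between the local order and the vanishing order of the section.
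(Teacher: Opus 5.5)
Your proposal is correct and follows the paper's approach. The paper proves (iii) the same way, via $\deg\pi_j^*\mathcal{O}(1)=d_j$ and the fact that $\pi_j^*u$ vanishes only at $\infty$, and it leaves (i) and (ii) as easy consequences of Lemma \ref{tautlem}. You fill in those two parts in the natural way, using the locality of $\pi_j^*$ for (i) and the bound $\deg\pi_j^*\mathcal{O}(D)=Dd_j$ on the zeros of $\pi_j^*P$ for (ii).
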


By taking Lemma \ref{tautlem} into account, the proofs of these facts are easy. We only remark about part (iii). We have $\deg\pi_j^*\mathcal{O}(1)=d_j$, and the zero locus of $\pi_j^*u\in H^0(\mathbb{P}^1,\pi_j^*\mathcal{O}(1))$ is exactly $\{\infty\}$. Hence $\mathrm{ord}_{\infty}(\pi_j^*u)=d_j$ and the result follows.

Next, we investigate the relation of $v_j$ and $v_{j+1}$, and deduce a result similar to the ``triangle inequality" in \cite[Proposition 3.15]{YZ}.

For every semivaluation $v$ on $R$ and every ideal $\mathfrak{a}\subseteq R$, we write
$$
v(\mathfrak{a})\coloneqq\min\{v(a)\mid a\in\mathfrak{a}\}.
$$

\begin{lemma}\label{evolve}
\begin{enumerate}
\item
We have $v_{j+1}(\varphi(h))=dv_j(h)$ for $j\geq0$ and every $h\in R$.

\item
Let $\mathfrak{m}$ be the maximal ideal of $R$. Then we have
$$
ev_{j+1}(\mathfrak{m})\geq\min\{1,dv_j(\mathfrak{m})\}
$$
for every $j$. As a result, we have $v_j(\mathfrak{m})\geq\frac{1}{e}$ for all sufficiently large $j$.
\end{enumerate}
\end{lemma}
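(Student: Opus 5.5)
For part (i), I would start from the relation $f\circ\pi_{j+1}=\pi_j\circ\psi$ for a suitable finite morphism $\psi\colon\mathbb{P}^1\to\mathbb{P}^1$. Since $f(C_{-j-1})=C_{-j}$, the map $f|_{\overline{C_{-j-1}}}$ lifts to the normalizations, giving $\psi$. Because $\infty$ is the unique preimage of $q$ under both normalization maps and $f(q)=q$, we get $\psi(\infty)=\infty$, and in fact $\psi^{-1}(\infty)=\{\infty\}$. So $\psi$ is a polynomial map of some degree $\delta$, and $\mathrm{ord}_\infty(\psi^*s)=\delta\,\mathrm{ord}_\infty(s)$ for every $s\in\widehat{\mathcal{O}_{\mathbb{P}^1,\infty}}$. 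Therefore
$$
\mathrm{ord}_\infty(\pi_{j+1}^*\varphi(h))=\mathrm{ord}_\infty(\psi^*\pi_j^*h)=\delta\,\mathrm{ord}_\infty(\pi_j^*h).
$$
It remains to show $\delta d_j=d\,d_{j+1}$. For this I compute degrees of pulled-back line bundles: $\deg(f\circ\pi_{j+1})^*\mathcal{O}(1)=d\cdot\deg\pi_{j+1}^*\mathcal{O}(1)=d\,d_{j+1}$, since $f^*\mathcal{O}(1)=\mathcal{O}(d)$. On the other hand, $\deg(\pi_j\circ\psi)^*\mathcal{O}(1)=\delta d_j$. Comparing the two gives $v_{j+1}(\varphi(h))=\frac{\delta}{d_{j+1}}\mathrm{ord}_\infty(\pi_j^*h)=d\,v_j(h)$. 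As a sanity check, (iii) of Lemma \ref{basic} together with $\varphi(u)=u^d/G_N$, where $G_N$ is a unit, gives $v_{j+1}(\varphi(u))=d$, which agrees.

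For part (ii), the key input is that the ideal $J=\varphi(\mathfrak{m})R=(u^d,G_1,\dots,G_{N-1})$ satisfies $R/(u,G_1,\dots,G_{N-1})$ of length $e$. This length being finite means the ideal $I=(u,G_1,\dots,G_{N-1})$ is $\mathfrak{m}$-primary, and it contains $\mathfrak{m}^e$. Indeed, for a length-$e$ local Artin quotient the chain $\mathfrak{m}^k+I$ stabilizes by step $e$, and Nakayama then gives $\mathfrak{m}^e\subseteq I$. Hence
$$
e\,v_{j+1}(\mathfrak{m})=v_{j+1}(\mathfrak{m}^e)\ge v_{j+1}(I)=\min\{v_{j+1}(u),v_{j+1}(G_1),\dots,v_{j+1}(G_{N-1})\}.
$$
Here $v_{j+1}(u)=1$ by Lemma \ref{basic}(iii). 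For $1\le i\le N-1$ we have $G_i=G_N\varphi(z_i)$ with $G_N$ a unit, so part (i) gives $v_{j+1}(G_i)=d\,v_j(z_i)\ge d\,v_j(\mathfrak{m})$. This yields $e\,v_{j+1}(\mathfrak{m})\ge\min\{1,d\,v_j(\mathfrak{m})\}$. The minimum is attained on generators because $v$ is a semivaluation: it is multiplicative and $\ge0$ on $R$, so its value on an ideal is the minimum over any generating set.

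For the consequence, set $a_j=v_j(\mathfrak{m})$. By Lemma \ref{basic}(i), $a_j>0$, since elements of $\mathfrak{m}$ are nonunits. The recursion is $a_{j+1}\ge\min\{1/e,(d/e)a_j\}$ with $d/e>1$. If $a_j<1/d$, then $a_{j+1}\ge(d/e)a_j$, so the sequence grows geometrically while it stays below $1/d$; hence some $a_{j_0}\ge1/d$. Once $a_j\ge1/d$, we have $(d/e)a_j\ge1/e$, so $a_{j+1}\ge1/e$. Then $a_{j+1}\ge1/e\ge1/d$ is preserved by induction (note $e<d$), so $a_j\ge1/e$ for all large $j$.

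The step I expect to be most delicate is the lifting and degree bookkeeping in (i): one has to ensure that $\psi$ really satisfies $\psi^{-1}(\infty)=\{\infty\}$ and that the degree identity holds with the curve degrees $d_j$. This uses that $\pi_j$ is birational onto its image, so that $\deg\pi_j^*\mathcal{O}(1)=d_j$.
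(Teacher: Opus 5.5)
Your proposal is correct and follows essentially the same route as the paper. You use the lift $\psi$ (the paper's $g_j$) with $\psi^{-1}(\infty)=\{\infty\}$, the containment $\mathfrak{m}^e\subseteq(u,G_1,\dots,G_{N-1})$, and the same recursion for $v_j(\mathfrak{m})$. The only difference is cosmetic: you get $\delta d_j=d\,d_{j+1}$ by comparing $\deg\pi_{j+1}^*f^*\mathcal{O}(1)=\deg\pi_{j+1}^*\mathcal{O}(d)$ with $\deg\psi^*\pi_j^*\mathcal{O}(1)$, whereas the paper plugs $h=u$ into the order relation and uses $\varphi(u)=u^d\cdot(\text{unit})$ with Lemma~\ref{basic}(iii), which is exactly your sanity check.
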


\begin{proof}
\begin{enumerate}
\item
Since $f(\overline{C_{-(j+1)}})=\overline{C_{-j}}$, we get surjective endomorphisms $g_j\colon\mathbb{P}^1\to\mathbb{P}^1$ such that
$$
\pi_j\circ g_j=f\circ\pi_{j+1}
$$
holds for every $j$. Write $a_j=\deg g_j$. Since $q$ is the unique element in $(f|_{\overline{C_{-(j+1)}}})^{-1}(q)$, we can see that $g_j^{-1}(\infty)=\{\infty\}$ and hence $g_j$ is a polynomial of degree $a_j$. We have $\pi_{j+1}^*\circ\varphi=g_j^*\circ\pi_j^*$ where
$$
\varphi=f^*\colon R\to R,\quad\pi_j^*,\pi_{j+1}^*\colon R\to\widehat{\mathcal{O}_{\mathbb{P}^1,\infty}},\quad g_j^*\colon\widehat{\mathcal{O}_{\mathbb{P}^1,\infty}}\to\widehat{\mathcal{O}_{\mathbb{P}^1,\infty}}
$$
are the pull-back morphisms. As a result, we have $\mathrm{ord}_{\infty}(\pi_{j+1}^*(\varphi(h)))=a_j\cdot\mathrm{ord}_{\infty}(\pi_j^*h)$ for every $h\in R$.

Taking $h=u$ and using the fact that $\varphi(u)=u^dU$ for some $U\in R^{\times}$, we get
$$
d\cdot\mathrm{ord}_{\infty}(\pi_{j+1}^*u)=a_j\cdot\mathrm{ord}_{\infty}(\pi_j^*u).
$$
According to Lemma \ref{basic}(iii), this means $dd_{j+1}=a_jd_j$. Hence we get $v_{j+1}(\varphi(h))=dv_j(h)$ for every $h\in R$.

\item
We have $v_j(\mathfrak{m})=\min\{1,v_j(z_1),\dots,v_j(z_{N-1})\}>0$. According to Notation \ref{notation2}, we have
$$
v_{j+1}(G_i)=v_{j+1}(\varphi(z_i))=dv_j(z_i)
$$
for every $1\leq i\leq N-1$. Recall that the length of the Artin local ring $R/(u,G_1,\dots,G_{N-1})$ is $e$. Therefore, we have $\mathfrak{m}^e\subseteq(u,G_1,\dots,G_{N-1})$. Thus
\begin{align*}
ev_{j+1}(\mathfrak{m})&=v_{j+1}(\mathfrak{m}^e)\geq v_{j+1}((u,G_1,\dots,G_{N-1}))=\min\{1,v_{j+1}(G_1),\dots,v_{j+1}(G_{N-1})\}\\
&=\min\{1,dv_j(z_1),\dots,dv_j(z_{N-1})\}=\min\{1,dv_j(\mathfrak{m})\}.
\end{align*}

Our assumption of Proposition \ref{mainprop} says that $e<d$. Hence we have $v_j(\mathfrak{m})\geq\frac{1}{e}$ for all sufficiently large $j$ as these quantities are strictly positive.
\end{enumerate}
\end{proof}

\begin{remark}\label{slopermk}
From the geometric point of view, the quantity $v_j(\mathfrak{m})$ can more or less be comprehended as the ``logarithmic slope" of the branch of $C_{-j}$ at $q$. Please see the discussions in subsection \ref{sec4.2}.
\end{remark}

\subsection{Convergent subnets in compact topological spaces}\label{sec2.3}

In this subsection, we recall the concept of ``nets" in general topology. This notion is introduced to overcome the annoying fact that compact spaces may \emph{not} be sequentially compact. The materials in this subsection should be well-known to experts. We use \cite{net} as the reference.

A \emph{directed set} is a set with a preorder relation such that every two elements have an upper bound \cite[Definition 12]{net}.

\begin{definition}(\cite[Definitions 13 and 19]{net})\label{nets}
\begin{enumerate}
\item
A \emph{net} in a topological space $X$ is a function $f$ from a directed set $D$ to X. We usually write $f(d)=x_d$ for all $d\in D$ and refer to the net by $(x_d)_{d\in D}$.

\item
A net $(x_d)_{d\in D}$ \emph{converges} to a point $x\in X$ if for every neighborhood $U$ of $x$, there is a $d\in D$ such that $x_e\in U$ for all $e\geq d$.

\item
Let $(x_d)_{d\in D}$ be a net. A \emph{subnet} of $(x_d)_{d\in D}$ is a net $(x_{f(e)})_{e\in E}$ where $E$ is a directed set and $f\colon E\to D$ is a function such that
\begin{enumerate}
\item
if $e_1\leq e_2$, then $f(e_1)\leq f(e_2)$ (i.e. $f$ is order-preserving) and
\item
for all $d\in D$, there is an $e\in E$ such that $f(e)\geq d$.
\end{enumerate}
\end{enumerate}
\end{definition}

A sequence in a topological space is naturally a net. We need the following result.

\begin{proposition}(\cite[Proposition 20]{net})\label{convergesubnet}
In a compact topological space, every sequence has a convergent subnet.
\end{proposition}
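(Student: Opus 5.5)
The statement is the standard fact that in a compact space every sequence has a convergent subnet, cited from \cite[Proposition 20]{net}. I would prove it directly through cluster points. Let $(x_n)_{n\in\mathbb{N}}$ be a sequence in a compact space $X$. The first step is to produce a cluster point. For each $m$, set $F_m=\overline{\{x_n\mid n\geq m\}}$. These are nonempty closed sets and form a decreasing chain, so every finite subfamily has nonempty intersection. Compactness, in its finite-intersection-property form, then gives a point $x\in\bigcap_m F_m$. Unwinding the definition, for every neighborhood $U$ of $x$ and every $m\in\mathbb{N}$ there exists $n\geq m$ with $x_n\in U$.

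The second step builds the subnet. Let $E$ be the set of pairs $(U,m)$ with $U$ an open neighborhood of $x$ and $m\in\mathbb{N}$. Order it by $(U,m)\leq(U',m')$ if and only if $U'\subseteq U$ and $m\leq m'$. This is a directed set, since $(U\cap U',\max\{m,m'\})$ is an upper bound of $(U,m)$ and $(U',m')$.

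The main subtlety is that Definition \ref{nets}(iii)(a) demands that the reindexing map be order-preserving. A naive choice of some $n\geq m$ with $x_n\in U$ need not be monotone. I would fix this by enlarging the index set to
$$E'=\{(U,m,n)\mid (U,m)\in E,\ n\geq m,\ x_n\in U\},$$
ordered by $(U,m,n)\leq(U',m',n')$ if and only if $(U,m)\leq(U',m')$ and $n\leq n'$. The map $f(U,m,n)=n$ is then tautologically order-preserving.

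Directedness of $E'$ follows from the cluster-point property. Given two triples $(U,m,n)$ and $(U',m',n')$, put $W=U\cap U'$ and $M=\max\{m,m',n,n'\}$. Choose $n''\geq M$ with $x_{n''}\in W$. Then $(W,M,n'')$ is an upper bound of both triples.

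Cofinality, condition (iii)(b), works the same way. Given $d\in\mathbb{N}$, pick any $n\geq d$ with $x_n\in X$. The triple $(X,d,n)$ lies in $E'$ and satisfies $f(X,d,n)=n\geq d$.

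Finally, convergence. Let $V$ be a neighborhood of $x$, and shrink it to an open one if needed. Pick $n_0\geq0$ with $x_{n_0}\in V$, which exists by the cluster-point property. Then every $(U,m,n)\geq(V,0,n_0)$ satisfies $U\subseteq V$ and $x_n\in U$, hence $x_{f(U,m,n)}=x_n\in V$. So the subnet $(x_{f(e)})_{e\in E'}$ converges to $x$.

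The only real obstacle is the monotonicity requirement in the paper's definition of subnet. The three-component index set above handles it.
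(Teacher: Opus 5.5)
Your proof is correct. The finite-intersection argument produces a cluster point, and indexing by triples $(U,m,n)$ with $x_n\in U$ makes the map $(U,m,n)\mapsto n$ order-preserving by construction, which is exactly what Definition \ref{nets}(iii)(a) requires.

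The paper gives no proof of its own and only cites \cite[Proposition 20]{net}; your argument is the standard cluster-point-plus-neighborhood-indexed-subnet proof of that fact. The component $m$ is redundant, since pairs $(U,n)$ with $x_n\in U$ already suffice, but it does no harm.
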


\section{Proof of the main results}\label{Sec3}

We prove Proposition \ref{mainprop} in subsection \ref{sec3.1} and prove Theorem \ref{mainthm} in subsection \ref{sec3.2}.

\subsection{Proof of Proposition \ref{mainprop}}\label{sec3.1}

We proceed from the preparations that have been done in subsections \ref{sec2.1}--\ref{sec2.2}. One can check that the conclusion of Proposition \ref{mainprop} is equivalent to the seemingly weaker assertion that the curves $(C_{-j})_{j\geq0}$ cannot be pairwise distinct. So by contradiction, we assume that they are pairwise distinct. We will deduce a contradiction in this subsection and hence prove Proposition \ref{mainprop}.

For every $j\geq0$, we have introduced a semivaluation $v_j$ on the complete local ring $R$ in subsection \ref{sec2.2}. In the next step, we want to find an increasing sequence $(n_k)_{k\geq0}$ of integers and a sequence $(w_i)_{i\geq0}$ of semivaluations on $R$ such that
$$
w_i(h)=\lim\limits_{k\to\infty}v_{n_k+i}(h)
$$
holds for every $i$ and every $h\in R$. Here we use the natural compact topology on the range $\mathbb{R}_{\geq0}\cup\{+\infty\}$ and abbreviate this topological space as $[0,+\infty]$. However, one may encounter difficulties when finding such a \emph{sequence}. We use the notion of nets in subsection \ref{sec2.3} to overcome such difficulties and proceed as follows. A.I. pointed out that a variant of this argument was used in \cite{BFJ08}.

Consider the compact topological space $X=\prod\limits_{i\geq0,h\in R}[0,+\infty]$ and the sequence $(x_n)_{n\geq0}$ in $X$ defined as $x_n=(v_{n+i}(h))_{i,h}$. Using Proposition \ref{convergesubnet}, we can find a convergent subnet $(x_{n_{\alpha}})_{\alpha\in\Lambda}$ of $(x_n)_{n\geq0}$ where $\Lambda$ is a directed set. Let $w_i(h)$ be the $(i,h)$-component of the limit $\lim\limits_{\alpha\in\Lambda}x_{n_{\alpha}}$. Then we have
$$
w_i(h)=\lim\limits_{\alpha\in\Lambda}v_{n_{\alpha}+i}(h).
$$
For every $i\geq0$, one can check that $w_i$ gives a semivaluation on $R$.

In the following, we always write $\mathfrak{m}$ as the maximal ideal of $R$. According to Lemma \ref{evolve}(ii), we may assume $v_j(\mathfrak{m})\geq\frac{1}{e}$ for every $j\geq0$ after discarding finitely many initial curves in $(C_{-j})_{j\geq0}$. Then the following lemma directly follows from Lemma \ref{basic}(ii) and Lemma \ref{evolve}.

\begin{lemma}\label{wiproperty}
\begin{enumerate}
\item
For every $i$ and every $P\in A$, we have $w_i(P)\in[0,\deg P]\cup\{+\infty\}$.
\item
For $i\geq0$ and $h\in R$, we have $w_{i+1}(\varphi(h))=dw_i(h)$.
\item
We have $w_i(\mathfrak{m})\geq\frac{1}{e}$ for every $i$.
\end{enumerate}
\end{lemma}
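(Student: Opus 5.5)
Each of the three assertions is obtained by passing to the limit along the subnet $(n_\alpha)_{\alpha\in\Lambda}$ in a statement that already holds for every single $v_j$. The only facts about nets we need are two. First, limits in $[0,+\infty]$ preserve closed conditions such as membership in a closed subset or non-strict inequalities. Second, for each fixed $i$ the map $\alpha\mapsto n_\alpha+i$ makes $(v_{n_\alpha+i}(h))_\alpha$ a subnet-type family whose limit is $w_i(h)$ by definition of the product topology.

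For (i), fix $i$ and $P\in A$. By Lemma \ref{basic}(ii), $v_{n_\alpha+i}(P)\in[0,\deg P]\cup\{+\infty\}$ for every $\alpha$. This set is closed in the compact space $[0,+\infty]$, so the limit $w_i(P)$ also lies in it.

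For (ii), fix $i$ and $h\in R$. Lemma \ref{evolve}(i) applied with $j=n_\alpha+i$ gives $v_{n_\alpha+i+1}(\varphi(h))=d\,v_{n_\alpha+i}(h)$ for every $\alpha$. The left side converges to $w_{i+1}(\varphi(h))$, since $\varphi(h)\in R$ is one of the coordinates indexed by $(i+1,\varphi(h))$. The right side converges to $d\,w_i(h)$, because multiplication by $d$ is continuous on $[0,+\infty]$ with $d\cdot(+\infty)=+\infty$. Limits in the Hausdorff space $[0,+\infty]$ are unique, so the two limits agree.

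For (iii), we must take a little care, because $w_i(\mathfrak{m})$ is a minimum over infinitely many elements and is not a single coordinate. We argue elementwise. After the discarding step, for every $j$ and every $a\in\mathfrak{m}$ we have $v_j(a)\geq v_j(\mathfrak{m})\geq\frac1e$. Passing to the limit gives $w_i(a)\geq\frac1e$ for every $a\in\mathfrak{m}$, since $[\frac1e,+\infty]$ is closed. Hence $\inf_{a\in\mathfrak{m}}w_i(a)\geq\frac1e$. We should also check that this infimum is attained, so that the notation $w_i(\mathfrak{m})=\min$ is legitimate. The ultrametric inequality and multiplicativity of the semivaluation $w_i$ give $w_i(\sum r_k a_k)\geq\min_k w_i(a_k)$ for the generators $a_k\in\{u,z_1,\dots,z_{N-1}\}$ of $\mathfrak{m}$. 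Hence the minimum is attained on a generator, and the bound holds.

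None of these steps is a real obstacle. The one point that needs attention is (iii): one must avoid claiming that $w_i(\mathfrak{m})=\lim v_{n_\alpha+i}(\mathfrak{m})$, and argue elementwise instead, as above.
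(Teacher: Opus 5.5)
Your proposal is correct and follows the paper's route: the paper simply says the lemma follows directly from Lemma \ref{basic}(ii) and Lemma \ref{evolve} by passing to the limit along the subnet, which is exactly what you carry out coordinatewise. The extra caution in (iii) is harmless but not needed: $\mathfrak{m}=(u,z_1,\dots,z_{N-1})$ is finitely generated, so by your own generator argument $v(\mathfrak{m})$ is a minimum of finitely many coordinates for any semivaluation $v$, and hence $w_i(\mathfrak{m})=\lim_{\alpha}v_{n_\alpha+i}(\mathfrak{m})$ does in fact hold.
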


Next, we consider the following quantities. For $i\geq0$ and $t\in\mathbb{Z}_+$, we consider the ideal
$$
\mathfrak{a}_t(w_i)\coloneqq\{h\in R\mid w_i(h)\geq t\}
$$
of $R$ and denote $L_i(t)=\dim_{\mathbb{C}}R/\mathfrak{a}_t(w_i)$. A.I. pointed out that such objects had been considered in \cite{ELS03}. Lemma \ref{wiproperty}(iii) guarantees that $\mathfrak{m}^{\lceil et\rceil}\subseteq\mathfrak{a}_t(w_i)$ and hence the numbers $L_i(t)<+\infty$.

The following key proposition gives a lower bound of $L_i(t)$.

\begin{proposition}\label{lowbd}
We have $L_i(t)\geq\frac{t(t+1)}{2}$ for every $i\geq0$ and $t\geq1$.
\end{proposition}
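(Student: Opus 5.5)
The plan is to produce $\frac{t(t+1)}{2}$ elements of $R$ whose classes in $R/\mathfrak{a}_t(w_i)$ are linearly independent. I will take them among the polynomials $P\in A$ with $\deg P\leq t-1$. Fix $i\geq0$ and $t\geq1$, and let $A_{<t}\subseteq A$ be the finite-dimensional space of polynomials of degree $\leq t-1$. Put
$$W=A_{<t}\cap\mathfrak{a}_t(w_i)=\{P\in A_{<t}\mid w_i(P)\geq t\}.$$
This is a linear subspace, since $w_i$ is a semivaluation. The map $A_{<t}/W\to R/\mathfrak{a}_t(w_i)$ is injective, so it suffices to show $\dim A_{<t}/W\geq\frac{t(t+1)}{2}$.

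\textbf{Step 1: describing $W$.} Take $P\in W$. By Lemma \ref{basic}(ii), each $v_{n_\alpha+i}(P)$ lies in $[0,t-1]\cup\{+\infty\}$. Since the limit $w_i(P)$ is $\geq t$, there is an $\alpha_P\in\Lambda$ such that $v_{n_\alpha+i}(P)=+\infty$ for all $\alpha\geq\alpha_P$. Pick a basis $P_1,\dots,P_r$ of $W$ and an upper bound $\alpha_0$ of the $\alpha_{P_k}$ in the directed set $\Lambda$. Then for every $\alpha\geq\alpha_0$, every element of $W$ vanishes on $\overline{C_{-(n_\alpha+i)}}\cap U_N$.

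The subnet condition (ii) of Definition \ref{nets} shows that $\{n_\alpha\mid\alpha\geq\alpha_0\}$ is unbounded. Combined with the standing assumption that the curves are pairwise distinct, this gives infinitely many distinct irreducible curves $\overline{C_{-j}}\cap U_N$ on which all of $W$ vanishes. Let $Y\subseteq U_N\cong\mathbb{A}^N$ be the Zariski closure of their union. Since infinitely many distinct irreducible curves cannot lie in a variety of dimension $\leq1$, $Y$ contains an irreducible surface $S$. Hence $W\subseteq I(S)\cap A_{<t}$.

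\textbf{Step 2: the Hilbert-function bound for $S$.} Choose two general affine-linear polynomials $\ell_1,\ell_2\in A$ of degree $1$ such that $(\ell_1,\ell_2)\colon S\to\mathbb{A}^2$ is dominant; this is possible by Noether normalization or a generic projection. Then $\mathbb{C}[\ell_1,\ell_2]\to\mathbb{C}[S]$ is injective.

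The monomials $\ell_1^a\ell_2^b$ with $a+b\leq t-1$ lie in $A_{<t}$, there are exactly $\frac{t(t+1)}{2}$ of them, and they are linearly independent modulo $I(S)$, hence also modulo $W$. Therefore $L_i(t)\geq\dim A_{<t}/W\geq\frac{t(t+1)}{2}$.

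\textbf{Main obstacle.} The delicate point is Step 1: passing from the limiting condition $w_i(P)\geq t$ to honest vanishing on infinitely many distinct curves. This uses three ingredients:
\begin{enumerate}
\item the gap phenomenon of Lemma \ref{basic}(ii), namely that the value is either $\leq\deg P$ or $+\infty$;
\item a uniform choice of $\alpha_0$ for the whole finite-dimensional space $W$, which needs $\Lambda$ to be directed;
\item cofinality of the subnet, to obtain infinitely many distinct indices $n_\alpha+i$.
\end{enumerate}
One also has to be careful that the degree of $P$ in $A$ matches the degree used in Lemma \ref{basic}(ii). It does, because we work entirely in the chart $U_N$, with $u$ and the $z_k$ as the affine coordinates.
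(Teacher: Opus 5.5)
Your proof is correct and follows essentially the same route as the paper. The paper packages your space $W$ as $A_{<t}\cap\mathfrak{p}_i$ for the prime ideal $\mathfrak{p}_i=\{P\in A\mid w_i(P)=+\infty\}$ and proves $\dim A/\mathfrak{p}_i\geq 2$ in Lemma \ref{dim2} by the same gap, uniform-index and cofinality argument, using finite generation of $\mathfrak{p}_i$ where you use $\dim W<\infty$; it then takes two coordinate functions algebraically independent modulo $\mathfrak{p}_i$ in place of your generic $\ell_1,\ell_2$ on $S$. One small point: the unboundedness of $\{n_\alpha\mid\alpha\geq\alpha_0\}$ needs the monotonicity condition (i) and the directedness of $\Lambda$, not only condition (ii).
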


We need the following lemma.

\begin{lemma}\label{dim2}
For $i\geq0$, let $\mathfrak{p}_i\coloneqq\{P\in A\mid w_i(P)=+\infty\}$. Then $\mathfrak{p}_i$ is a prime ideal of the polynomial ring $A$. We have $\dim A/\mathfrak{p}_i\geq2$.
\end{lemma}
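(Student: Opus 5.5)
Primality is formal. Since $w_i$ is a pointwise limit of semivaluations, it is a semivaluation on $R$, and its restriction to $A$ is still multiplicative with values in $[0,+\infty]$. If $PQ\in\mathfrak{p}_i$, then $w_i(P)+w_i(Q)=+\infty$, so one of $P,Q$ lies in $\mathfrak{p}_i$. Closure under addition follows from the ultrametric inequality, and $1\notin\mathfrak{p}_i$ because $w_i(1)=0$. Hence $\mathfrak{p}_i$ is a prime ideal of $A$.

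For the dimension bound I argue by contradiction and suppose $\dim A/\mathfrak{p}_i\leq1$. The ideal $\mathfrak{p}_i$ is contained in $\mathfrak{m}$: otherwise it would contain a polynomial $P$ with $P(0)\neq0$, which is a unit in $R$, giving $w_i(P)=0$. So $V(\mathfrak{p}_i)\subseteq U_N$ is either the point $q$ or an irreducible curve through $q$.

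\textbf{Point case.} If $V(\mathfrak{p}_i)=\{q\}$, then $\mathfrak{m}\subseteq\mathfrak{p}_i$, so $w_i(u)=+\infty$. This contradicts Lemma \ref{wiproperty}(i), which gives $w_i(u)\leq\deg u=1$ (indeed $v_j(u)=1$ for all $j$, so $w_i(u)=1$).

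\textbf{Curve case.} Let $D=V(\mathfrak{p}_i)$ be an irreducible curve through $q$. I would exploit the fact that the curves $C_{-j}$ are pairwise distinct. Choose finitely many polynomials $P_1,\dots,P_s$, each of bounded degree, generating $\mathfrak{p}_i$. Each satisfies $w_i(P_r)=+\infty$, so along the subnet $v_{n_\alpha+i}(P_r)\to+\infty$. Pick a general linear combination $P$ of the $P_r$, or better a combination of degree at most some $m$. Since $C_{-(n_\alpha+i)}$ is eventually not contained in $D$, and for a fixed $P$ only finitely many of the distinct curves can lie in $V(P)$, $P$ does not vanish on $\overline{C_{-(n_\alpha+i)}}$ along the net. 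Lemma \ref{basic}(ii) then gives $v_{n_\alpha+i}(P)\leq\deg P$, which is finite but does not yet contradict divergence.

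The better approach uses B\'ezout. The finite value $v_j(P)$ is the local intersection number at $q$ of $\overline{C_{-j}}$ with $\{P=0\}$, divided by $d_j$, and this is at most $\deg P$. I need a polynomial $P$ whose zero set meets $D$ properly but with $w_i(P)=+\infty$, and that is impossible when $P\notin\mathfrak{p}_i$. So the contradiction must instead come from $w_i(\mathfrak{m})\geq\frac1e>0$ together with $w_i$ being centered on the curve $D$ through $q$.

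Take coordinates: choose a polynomial $\ell\in\mathfrak{m}$ not in $\mathfrak{p}_i$, restrict to $D$, and compare $w_i$ with the induced data on the one-dimensional local ring of $D$ at $q$. On $A/\mathfrak{p}_i$, $w_i$ induces a semivaluation that is finite on nonzero elements, so it is a valuation centered at $q$ on the curve $D$. It is therefore proportional to $\mathrm{ord}$ along some branch of $D$ at $q$. Its values are bounded by degree, which gives the linear growth $w_i(P)\leq\deg P$.

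The real obstacle is to derive a contradiction from this. I would combine it with Lemma \ref{wiproperty}(ii). Since $\varphi$ maps $\mathfrak{p}_i$ into $\mathfrak{p}_{i+1}$, we get $f(D_{i+1})\subseteq D_i$, so the curves $D_i$ form a backward orbit of curves at $q$. Using $w_i(u)=1$ and $w_i(\mathfrak{m})\geq 1/e$, I expect to show that $D_i$ must lie in $H_\infty$: $w_i$ being a limit of normalized branch valuations forces $u$ to have positive but finite value. Then the relation $e<d$ should contradict the scaling $w_{i+1}(\varphi(u))=d\,w_i(u)$, because on $H_\infty$ the local degree of $f_\infty$ at $q$ is $e$.

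I expect this last comparison of the local degree $e$ against $d$ to be the main difficulty. The fallback is to use the curves' distinctness and a degree/B\'ezout bound applied to the $d_j$ to rule out degeneration to a curve.
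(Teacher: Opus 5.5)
Your primality argument and the point case are fine. The curve case has a genuine gap, and it sits exactly at the step you dismissed. You write that Lemma \ref{basic}(ii) gives $v_{n_\alpha+i}(P)\leq\deg P$, ``which is finite but does not yet contradict divergence.'' It does contradict it. The bound $\deg P$ is independent of the curve index, so by Lemma \ref{basic}(ii) every value $v_j(P)$ lies in $[0,\deg P]\cup\{+\infty\}$. Now apply the definition of net convergence to the neighborhood $(\deg P,+\infty]$ of $+\infty$. The convergence $v_{n_\alpha+i}(P)\to+\infty$ then forces $v_{n_\alpha+i}(P)=+\infty$ for all sufficiently large $\alpha$, i.e.\ $P$ vanishes on $\overline{C_{-(n_\alpha+i)}}\cap U_N$.

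This dichotomy is the whole proof in the paper. Apply it to finitely many generators of $\mathfrak{p}_i$ and take a common bound $\alpha_0$ in the directed set. Note that $n_\alpha\to\infty$ along the net, since a subnet map is order-preserving and cofinal. It follows that $V(\mathfrak{p}_i)\subseteq U_N$ contains infinitely many of the pairwise distinct irreducible curves $\overline{C_{-j}}\cap U_N$, hence has dimension at least $2$. No case distinction, B\'ezout argument, backward orbit of curves $D_i$, or hypothesis $e<d$ is needed. Your speculative endgame (showing $D_i\subseteq H_\infty$ and comparing $e$ with $d$) is unsupported, and it is not the mechanism at work here.

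Separately, your choice of a single $P$ rests on a false claim. ``For a fixed $P$ only finitely many of the distinct curves can lie in $V(P)$'' fails as soon as $N\geq3$, since a hypersurface then has dimension at least $2$ and contains infinitely many curves. Within your contradiction framework the correct repair is to use all generators $P_1,\dots,P_s$ at once. For all large $\alpha$, $\overline{C_{-(n_\alpha+i)}}\cap U_N\not\subseteq D$, because at most one of the distinct curves can coincide with $D$. So some $P_r$ does not vanish on that curve, and $\min_r v_{n_\alpha+i}(P_r)\leq\max_r\deg P_r$ eventually. This contradicts $\min_r v_{n_\alpha+i}(P_r)\to+\infty$.
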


\begin{proof}
For every $P\in\mathfrak{p}_i$, we have $w_i(P)=\lim\limits_{\alpha\in\Lambda}v_{n_{\alpha}+i}(P)=+\infty$. Lemma \ref{basic}(ii) then tells us that $P$ vanishes on $\overline{C_{-(n_{\alpha}+i)}}\cap U_N$ for every sufficiently large index $\alpha$. Since the ideal $\mathfrak{p}_i$ is finitely generated, there exists $\alpha_0\in\Lambda$ such that every $P\in\mathfrak{p}_i$ vanishes on the curve $\overline{C_{-(n_{\alpha}+i)}}\cap U_N$ once $\alpha\geq\alpha_0$. We know the function $(n_{\alpha})_{\alpha\in\Lambda}$ is unbounded and we have assumed that the curves $(C_{-j})_{j\geq0}$ are pairwise distinct. Therefore, the zero set defined by $\mathfrak{p}_i$ in $U_N$ contains infinitely many different curves. This finishes the proof.
\end{proof}

Now we can prove Proposition \ref{lowbd}.

\proof[Proof of Proposition \ref{lowbd}]
According to Lemma \ref{dim2}, we may choose two coordinate functions from $u,z_1,\dots,z_{N-1}$, denoted by $a$ and $b$, such that they are algebraically independent over $\mathbb{C}$ in $A/\mathfrak{p}_i$. We claim that the monomials
$$
a^ib^j\quad(i,j\in\mathbb{N},\ i+j<t)
$$
are linearly independent in $R/\mathfrak{a}_t(w_i)$ and hence gain the result. In fact, any nontrivial $\mathbb{C}$-linear combination $P$ of these monomials lies outside $\mathfrak{p}_i$ and has $\deg P<t$. We then get
$$
w_i(P)\leq\deg P<t
$$
by Lemma \ref{wiproperty}(i), which is equivalent to $P\notin\mathfrak{a}_t(w_i)$. This finishes the proof.
\endproof

Finally, we can finish the proof of Proposition \ref{mainprop}.

\proof[Proof of Proposition \ref{mainprop}]
We continue the previous discussions. Take the generators
$$
h_1,\dots,h_{de}\in R
$$
in the proof of Lemma \ref{fingen}. For every $i\geq0$, consider the $\mathbb{C}$-linear (instead of $R$-linear) map $(R/\mathfrak{a}_t(w_i))^{de}\to R/\mathfrak{a}_{dt}(w_{i+1})$ defined by
$$
([b_1],\dots,[b_{de}])\mapsto\left[\sum\limits_{a=1}^{de}h_a\varphi(b_a)\right].
$$
It is well-defined because of Lemma \ref{wiproperty}(ii), and it is by definition surjective. Hence for every $i\geq0$ and every $t\geq1$, we have
$$
L_{i+1}(dt)\leq deL_i(t).
$$
Iterating from $i=0$ and $t=1$ then gives $L_s(d^s)\leq(de)^sL_0(1)$ for every $s\geq0$, while Proposition \ref{lowbd} says that the left-hand-side is bounded below by $\frac{d^s(d^s+1)}{2}$. Since we have assumed that $e<d$, this is absurd for large $s$. This contradiction establishes Proposition \ref{mainprop}.
\endproof

\subsection{Proof of Theorem \ref{mainthm}}\label{sec3.2}

\proof[Proof of Theorem \ref{mainthm}]
Let $x\in\mathbb{A}^N(\mathbb{C})$ be the starting point and let $C\subseteq\mathbb{A}^N$ be an irreducible closed subcurve. Assume that the intersection $\mathcal{O}_f(x)\cap C$ is infinite. We need to prove that $C$ is $f$-periodic.

We first find a sequence of irreducible closed subcurves
$$
\cdots\stackrel{f}\rightarrow C_{-n}\stackrel{f}\rightarrow\cdots\stackrel{f}\rightarrow C_{-1}\stackrel{f}\rightarrow C_0=C
$$
of $\mathbb{A}^N$ such that each $C_{-n}$ has an infinite intersection with the orbit $\mathcal{O}_f(x)$. According to \cite[Theorem 1.3]{YZb}, we may assume that the normalization of each $C_{-n}$ is isomorphic to $\mathbb{A}^1$. Therefore, each set-theoretic intersection $\overline{C_{-n}}\cap H_{\infty}$ contains exactly one point, which we denote as $q_n$. Then we have $f_{\infty}(q_n)=q_{n-1}$ for every $n\geq1$ and \cite[Theorem 1.3(i)]{YZ} says that these $q_n$ are $f_{\infty}$-periodic points. Hence these $q_n$ have to run along a backward $f_{\infty}$-period in $H_{\infty}(\mathbb{C})$. Now the hypothesis about multiplicities, together with Proposition \ref{mainprop}, guarantee that the curve $C$ is $f$-periodic.
\endproof

\section{Appendix: connections with the arguments in \cite{YZ}}

When did the work \cite{YZ}, we got some geometric intuitions about the branches at infinity of curves in $\mathbb{A}^N$. We believe that the results in \cite{YZ} can also be gained in a geometric way by rigorously realizing these intuitions. The paper \cite{YZ} was written in a more arithmetic flavor not only because we like that argument using Roth's theorem, but also due to our technical limitations. Now we have more knowledge about the branches at infinity and we find here an appropriate place to share our thoughts on them. We interpret the idea of \cite{YZ} geometrically and connect them with the quantities in subsection \ref{sec2.2}.

The style of the Appendix may be informal and the arguments may be just heuristic. The main part of this article does not rely on the materials in this Appendix.

\subsection{Branches of curves at infinity}\label{sec4.1}

We consider an irreducible closed curve $C\subseteq\mathbb{A}^N$. Then $\overline{C}\cap H_{\infty}$ is a finite set and we fix a point $q\in\overline{C}\cap H_{\infty}$. There may be more than one branches at a point at infinity, and we want to clarify the concept of ``branches". We find four interpretations of this notion and we believe (or hope) that they are equivalent.

\begin{interpretation}\label{interpret}
\begin{enumerate}
\item
Assume $q=[0,0,\dots,0,1]$ and use the coordinate system $u,z_1,\dots,z_{N-1}$ as in Notation \ref{notation2}. A branch of $C$ at $q$ is defined by $N-1$ equations
$$
z_i=f_i(u),\quad 1\leq i\leq N-1
$$
where $f_1,\dots,f_{N-1}$ are Puiseux series such that this local analytic germ is contained in $C$.

\item
A branch of $C$ at $q$ is a local analytic irreducible component of $C$ at $q$.

\item
Branches of $C$ at $q$ are corresponding to minimal prime ideals in $\widehat{\mathcal{O}_{\overline{C},q}}$.

\item
Let $\pi\colon\widetilde{\overline{C}}\to\overline{C}$ be the normalization map. Branches of $C$ at $q$ are corresponding to points in $\pi^{-1}(q)$.
\end{enumerate}
\end{interpretation}

We make some remarks on these interpretations.

The first interpretation is quite rude. In the two dimensional case, branches can be seen in a down-to-earth way by writing down the Puiseux series. Although this may not be standard in the higher dimensional case, we believe this interpretation should still work. We believe (i) and (ii) are equivalent as the one-parameter germs in (i) should be analytic irreducible. Moreover, every local analytic irreducible component should admit such a presentation because we can project that branch to the coordinate planes $(u,z_i)$ for every $1\leq i\leq N-1$ and then assemble them in reverse. Please notice that there are subtleties in the assembling procedure. For example, the Puiseux series $u^{\frac{3}{2}}$ is not distinct from ``$-u^{\frac{3}{2}}$", while the systems
$$
z_1=u^{\frac{3}{2}},\quad z_2=u^{\frac{3}{2}}
$$
and
$$
z_1=u^{\frac{3}{2}},\quad z_2=-u^{\frac{3}{2}}
$$
are different. We believe that this issue can eventually be clarified anyway.

We believe (ii) and (iii) are equivalent as the minimal primes in the analytic local ring and the complete local ring should admit a one-to-one correspondence. The equivalence of (iii) and (iv) should follow from some algebraic facts \cite[Lemma 33.40.1 (0C1S), Lemma 15.110.7 (0C2D)]{Stacks}.

Sorry but please just regard the paragraphs as heuristics. We did not track through all over it.

\subsection{The logarithmic slope of a branch}\label{sec4.2}

Although it did not explicitly appear in \cite{YZ}, the following concept lies in the kernel of the arguments of that article. For an irreducible closed curve $C\subseteq\mathbb{A}^N$ and a point $q\in \overline{C}\cap H_{\infty}$, we write $C_{\mathrm{br}}(q)$ as a branch of $C$ at $q$.

\begin{definition}\label{logslope}
The \emph{logarithmic slope} of $C_{\mathrm{br}}(q)$ is defined as 
$$
r(C_{\mathrm{br}}(q))\coloneqq\lim\limits_{x\in C_{\mathrm{br}}(q),x\to q}\frac{-\log\mathrm{dist}(\pi(x),q)}{\lambda_{H_{\infty}}(x)}
$$
where $\pi:\mathbb{P}^N\setminus\{O\}\rightarrow H_{\infty}$ is the central projection and $\lambda_{H_{\infty}}(x)$ is the Weil function ``$-\log\mathrm{dist}(x,H_{\infty})$".
\end{definition}

Here the letter ``$r$" stands for ``ratio".

One can comprehend our meaning of ``distance". But rigorously speaking, we need to fix an absolute value to talk about the distance (this issue fits with the setting of \cite{YZ} --- in this article the absolute value is just the standard one on $\mathbb{C}$). We explain that the value $r(C_{\mathrm{br}}(q))$ should be irrelevant to the choice of the absolute value and the limit should exist by using the interpretation \ref{interpret}(i). Indeed, one can see that $r(C_{\mathrm{br}}(q))$ is just the minimal exponent appeared in the expressions of $f_1(u),\dots,f_{N-1}(u)$. In particular, it is strictly positive.

The quantity $r(C_{\mathrm{br}}(q))$ is not quite intrinsic. It equals to $+\infty$ if and only if $C$ is the ``vertical line" passing through $q$ (see \cite[Setting 1.2]{YZ}). The origin $O=(0,\dots,0)\in\mathbb{A}^N$ plays a special role here.

Now let us relate this quantity with ``$v_j(\mathfrak{m})$" in Lemma \ref{evolve}(ii). We give another interpretation of $r(C_{\mathrm{br}}(q))$ according to \ref{interpret}(iv).

We assume $q=[0,0,\dots,0,1]$ and use Notation \ref{notation2}. We write $\pi\colon C_0\to\overline{C}$ as the normalization map and let $p\in\pi^{-1}(q)$ be the point corresponding to $C_{\mathrm{br}}(q)$. Then we get the pull-back map $\pi^*\colon\mathcal{O}_{\mathbb{P}^N,q}\to\mathcal{O}_{C_0,p}$. One can feel that
\begin{align}\label{eq1}
r(C_{\mathrm{br}}(q))=\frac{\min\limits_{1\leq i\leq N-1}\mathrm{ord}_p(\pi^*z_i)}{\mathrm{ord}_p(\pi^*u)}.
\end{align}
Comparing with the expression of $v_j(\mathfrak{m})$ in subsection \ref{sec2.2}, we can find two differences.
\begin{enumerate}
\item
The denominator is $\mathrm{ord}_p(\pi^*u)$ rather than $\deg\overline{C}$.

\item
In the numerator, the term $\mathrm{ord}_p(\pi^*u)$ is missing.
\end{enumerate}

The reason for part (i) is that each curve $C_{-j}$ in subsection \ref{sec2.2} has only one branch at infinity in total and hence we have $\mathrm{ord}_{\infty}(\pi_j^*u)=\deg\overline{C_{-j}}$ in there, and part (ii) gives the identity
$$
v(\mathfrak{m})=\min\{1,r(C_{\mathrm{br}}(q))\}
$$
where $v$ is the semivaluation on $R$ induced by $\pi^*$. To be precise, the semivaluation $v$ is defined by
$$
v(h)=\frac{\mathrm{ord}_p(\pi^*h)}{\mathrm{ord}_p(\pi^*u)},\quad h\in R.
$$

We see that taking the minimum value of $r(C_{\mathrm{br}}(q))$ and $1$ makes this value more intrinsic and compatible with the setting of regular endomorphisms --- only $H_{\infty}$ rather than the origin $O\in\mathbb{A}^N$ plays a role now.

\subsection{The triangle inequality}\label{sec4.3}

The ``triangle inequality" \cite[Proposition 3.15]{YZ} is a crucial step in \cite{YZ}. We formulate a variant of this in the following proposition.

\begin{proposition}\label{triineq}
Let $f$ be a regular endomorphism of $\mathbb{A}^N$ with degree gap $k$ (see \cite[Setting 1.2]{YZ}). Let $C,C'\subseteq\mathbb{A}^N$ be irreducible closed subcurves such that $f(C')=C$. Pick $q\in\overline{C}\cap H_{\infty}$ and $q'\in\overline{C'}\cap H_{\infty}$ such that $f_{\infty}(q')=q$. Let $e=e_{f_{\infty}}(q')$. For a branch $C_{\mathrm{br}}'(q')$ of $C'$ that maps to the branch $C_{\mathrm{br}}(q)$ of $C$, we have
$$
e\cdot r(C_{\mathrm{br}}'(q'))\geq\min\{k,d\cdot r(C_{\mathrm{br}}(q))\}.
$$
\end{proposition}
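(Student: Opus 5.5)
The plan is to redo the argument of Lemma \ref{evolve}(ii), with two changes. First, normalize the semivaluations by $\mathrm{ord}_p(\pi^*u)$, as in subsection \ref{sec4.2}. Second, work only with the $z$-variables, so that nothing is truncated at $1$ and the degree gap $k$ enters in place of the constant $1$.

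\textbf{Coordinates and the pull-back formula.} Choose coordinates so that $q'=[0,\dots,0,1]$, and use the local coordinates $u,z_1,\dots,z_{N-1}$ of Notation \ref{notation2} at $q'$. After a linear change of the $z$-coordinates that fixes the origin $O$, we may also put $q=[0,\dots,0,1]$, with local coordinates $u,z_1,\dots,z_{N-1}$ at $q$. This change is harmless, since $r$ is defined through the central projection from $O$. Degree gap $k$ means that $f=f_d+(\text{terms of degree}\le d-k)$. In the chart, the formula of Notation \ref{notation2} then becomes
$$
\varphi(u)=\frac{u^d}{G_N},\qquad \varphi(z_i)=\frac{G_i}{G_N},\qquad G_i=F_i(z_1,\dots,z_{N-1})+u^kH_i ,
$$
where $F_i=f_i(z_1,\dots,z_{N-1},1)$, the $H_i$ lie in $R$, and $G_N$ is a unit.

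\textbf{Normalized semivaluations.} Let $p'\in\pi'^{-1}(q')$ and $p\in\pi^{-1}(q)$ be the points of the normalizations corresponding to the two branches. Set
$$
v'(h)=\frac{\mathrm{ord}_{p'}(\pi'^*h)}{\mathrm{ord}_{p'}(\pi'^*u)},\qquad v(h)=\frac{\mathrm{ord}_p(\pi^*h)}{\mathrm{ord}_p(\pi^*u)} .
$$
By \eqref{eq1}, $r'=\min_i v'(z_i)$ and $r=\min_i v(z_i)$.

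\textbf{Step 1: $v'(\varphi(h))=d\,v(h)$.} As in Lemma \ref{evolve}(i), $f|_{C'}$ lifts to a map $g$ of normalizations with $g(p')=p$, and $\pi\circ g=f\circ\pi'$. Let $a$ be the ramification index of $g$ at $p'$. Then
$$
\mathrm{ord}_{p'}(\pi'^*\varphi(h))=a\cdot\mathrm{ord}_p(\pi^*h).
$$
Taking $h=u$ and using $\varphi(u)=u^d\cdot\text{unit}$ gives $d\,\mathrm{ord}_{p'}(\pi'^*u)=a\,\mathrm{ord}_p(\pi^*u)$. Dividing the two identities gives the claim. Consequently $v'(G_i)=v'(\varphi(z_i))=d\,v(z_i)\ge d\,r$ for each $i$.

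\textbf{Step 2: the length bound in the $z$-variables.} The germ of $f_\infty$ at $q'$ is given by $F_1,\dots,F_{N-1}$ in the coordinates $z_1,\dots,z_{N-1}$ of $H_\infty$. So $e$ is the length of $\mathbb{C}[[z_1,\dots,z_{N-1}]]/(F_1,\dots,F_{N-1})$, and therefore $(z_1,\dots,z_{N-1})^e\subseteq(F_1,\dots,F_{N-1})$. Since $F_i=G_i-u^kH_i$, $v'(u)=1$ and $v'(H_i)\ge0$, we get $v'(F_i)\ge\min\{d\,r,k\}$. Every monomial of degree $e$ in the $z_j$ lies in $(F_1,\dots,F_{N-1})$, so
$$
e\cdot r'=\min_{|\beta|=e}v'(z^\beta)\ge\min_i v'(F_i)\ge\min\{k,d\,r\},
$$
which is the proposition. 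If some $v'(z_i)=+\infty$, the same computation goes through verbatim.

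\textbf{Main obstacle.} The delicate point is the coordinate bookkeeping. One must simultaneously put $q'$ and $q$ in standard position while preserving the origin $O$, on which $r$ depends. One must also confirm that the lower-order terms really contribute a factor $u^k$ in the chart, i.e.\ that the degree gap passes correctly to the affine chart $U_N$. Step 1 is routine once the lifted map $g$ is set up; its only subtlety is that $p$ need not be the unique preimage of $q$, so one works with local ramification indices rather than global degrees.
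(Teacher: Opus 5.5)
Your proposal is correct and follows the route the paper indicates without writing out: adapt the local-algebra length argument of Lemma \ref{evolve}(ii), using $(z_1,\dots,z_{N-1})^e\subseteq(F_1,\dots,F_{N-1})$ and $F_i=G_i-u^kH_i$ so that the degree gap $k$ replaces the truncation at $1$. The one step to phrase carefully is the coordinate change when $q\neq q'$: you must use independent origin-fixing linear coordinate changes on the source and on the target. This is legitimate because only one application of $f$ is involved, and such changes preserve the degree gap, $e_{f_\infty}(q')$ and the slopes $r$.
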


This proposition is a strengthening of Lemma \ref{evolve}(ii). In \cite[Proposition 3.15]{YZ}, the proof is done by an elementary calculation by using Definition \ref{logslope}. It can also be proved by adapting a similar calculation towards the lengths of local algebras as in subsection \ref{sec2.1}. We leave the details to the interested readers.

\subsection{A fundamental inequality}

Besides the triangle inequality, another key ingredient of \cite{YZ} is the following fact: if every branch $C_{\mathrm{br}}(q)$ of a Siegel curve $C\subseteq\mathbb{A}^N$ at infinity satisfies $r(C_{\mathrm{br}}(q))>2$, then $C$ must be a vertical line \cite[Proposition 4.5]{YZ}. By a Siegel curve, we mean an irreducible curve that its normalization is isomorphic to $\mathbb{A}^1$ or $\mathbb{G}_m$. In other words, it is a curve of genus $0$ with at most two branches at infinity. In fact, the ``genus $0$" condition is not important for the fact above and that fact is a special case of the following ``fundamental inequality".

\begin{proposition}\label{fundineq}
Let $C\subseteq\mathbb{A}^N$ be an irreducible closed subcurve. Then we have
$$
\sum\limits_{\text{branches at infinity}}\frac{1}{r(C_{\mathrm{br}}(q))}\geq1
$$
unless $C$ is a vertical line.
\end{proposition}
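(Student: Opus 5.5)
The plan is to compare, via a degree count on the normalization, the vanishing orders at infinity of $u$ with those of the central projection to $H_{\infty}$. Let $\pi\colon\widetilde{C}\to\overline{C}$ be the normalization, with $\widetilde{C}$ a smooth projective curve. For each point $p\in\pi^{-1}(H_{\infty})$, set $q_p=\pi(p)$ and $e_p=\mathrm{ord}_p(\pi^*u)$, computed in a chart at $q_p$ as in Notation \ref{notation2}. By the reasoning after Lemma \ref{basic}, the section $\pi^*u$ of $\pi^*\mathcal{O}(1)$ vanishes exactly at these points, so
$$
\sum_p e_p=\deg\overline{C}.
$$
By formula \eqref{eq1}, the logarithmic slope of the branch at $p$ is $r_p=\min_i\mathrm{ord}_p(\pi^*z_i)/e_p$, where the $z_i$ are the local coordinates at $q_p$ vanishing on it, with the coordinate that is nonzero at $q_p$ normalized to $1$.

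Next, consider the composition $\psi=\pi_O\circ\pi\colon\widetilde{C}\to H_{\infty}$, where $\pi_O$ is the central projection from $O$. It is given by the sections $\pi^*z_1,\dots,\pi^*z_N$ of $\pi^*\mathcal{O}(1)$ and extends to a morphism after removing the base divisor $B\geq0$. Since at infinite points some $z_i$ is nonzero, $B$ is supported over $O$, and it vanishes at every $p$ over $H_{\infty}$. Thus $\psi^*\mathcal{O}(1)=\pi^*\mathcal{O}(1)(-B)$, and the degree of this bundle is
$$
\delta\coloneqq\deg\psi^*\mathcal{O}(1)=\deg\overline{C}-\deg B\leq\deg\overline{C}.
$$
If $\psi$ is constant, then $C$ lies in a line through $O$, i.e. $C$ is a vertical line. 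Otherwise $\delta\geq1$, and $\psi$ is nonconstant.

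Now fix a point $p$ over infinity and take a general linear form $\ell$ in $z_1,\dots,z_N$ with $\ell(q_p)=0$. Since $B$ does not meet $p$, the section $\psi^*\ell$ vanishes at $p$ to order $\mathrm{ord}_p(\pi^*(\ell/z_{\bullet}))$. For general $\ell$ through $q_p$, this order is exactly $\min_i\mathrm{ord}_p(\pi^*z_i)=r_pe_p$, since generality avoids cancellation of the lowest-order terms. If $r_p=+\infty$, then all the $z_i$ vanish identically on the branch, which forces $C$ into the line $Oq_p$, the excluded case. Otherwise $\psi^*\ell$ is a nonzero section of a bundle of degree $\delta$, so
$$
r_pe_p\leq\delta,\quad\text{i.e.}\quad \frac{1}{r_p}\geq\frac{e_p}{\delta}.
$$
Summing over $p$ then gives
$$
\sum_p\frac{1}{r_p}\geq\frac{\sum_pe_p}{\delta}=\frac{\deg\overline{C}}{\delta}\geq1.
$$

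The main obstacle I expect is making the local identification rigorous. This means checking that the order of $\psi^*\ell$ at $p$, for general $\ell$ through $q_p$, equals $r_pe_p$ as given by \eqref{eq1}, including the bookkeeping of which coordinate is normalized at $q_p$. It also means matching branches with points of $\pi^{-1}(q)$ as in Interpretation \ref{interpret}(iv). I would first prove the identity $\min_i\mathrm{ord}_p(\pi^*z_i)=\mathrm{ord}_p(\pi^*\ell)$ for $\ell$ outside a finite union of proper linear subspaces. After that, the remaining steps are degree bookkeeping.
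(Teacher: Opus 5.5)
Your argument is correct and is essentially the paper's proof: Lemma \ref{fanjie} bounds $\min_i\mathrm{ord}_p(\pi^*z_i)$ by the degree of a line bundle carrying a nonzero section that vanishes to at least that order at $p$, and this bound is then summed using $\sum_p e_p=\deg\overline{C}$. Your detour through the central projection and the base divisor $B$ only sharpens the bound from $\deg\overline{C}$ to $\delta$, giving $\sum_p 1/r_p\geq\deg\overline{C}/\delta$; the step you flag as the main obstacle is unnecessary, because you only need $r_pe_p\leq\mathrm{ord}_p(\psi^*\ell)$, which holds for every $\ell$ through $q_p$ with $\psi^*\ell\neq0$ (for instance the coordinate $z_i$ attaining the minimum, as the paper uses), so no genericity or cancellation argument is required.
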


In the following, we explain the reason of this inequality. Firstly, we clarify the concept of ``local intersection number" of a branch with an effective Cartier divisor. The following definition is a reformulation of Lemma \ref{tautlem}. We consider a more general setting. The meaning of the notations should be clear.

\begin{definition}\label{locintnum}
Let $X$ be a projective variety and let $C\subseteq X$ be an irreducible closed subcurve. Let $\pi\colon\widetilde{C}\to C$ be the normalization map. Let $q$ be a closed point on $C$, let $C_{\mathrm{br}}(q)$ be a branch of $C$ passing through $q$, and let $p\in\pi^{-1}(q)$ be the point corresponding to the branch $C_{\mathrm{br}}(q)$. Let $L$ be a line bundle on $X$ and let $s$ be a global section of $L$. Then the \emph{intersection number} of $C_{\mathrm{br}}(q)$ with $\mathrm{div}(s)$, denoted as $C_{\mathrm{br}}(q)\cdot\mathrm{div}(s)$, is defined as the coefficient of $[p]$ in the Weil divisor $\mathrm{div}(\pi^*s)$.
\end{definition}

Hence our convention is that $C_{\mathrm{br}}(q)\cdot\mathrm{div}(s)=+\infty$ if and only if $s$ vanishes on $C$. If this is not the case, then we have
$$
\sum\limits_{\text{branches}}(C_{\mathrm{br}}(q)\cdot\mathrm{div}(s))=L\cdot C
$$
where the sum runs over the branches of $C$ at the points in $C\cap\mathrm{Supp}(\mathrm{div}(s))$.

Returning to our original setting about the branches at infinity, the intersection number of $C_{\mathrm{br}}(q)$ with $H_{\infty}$ can be interpreted in the following ways.
\begin{enumerate}
\item
Using the description \ref{interpret}(i), the number $C_{\mathrm{br}}(q)\cdot H_{\infty}$ is the least common denominator of the exponents of the Puiseux series $f_1(u),\dots,f_{N-1}(u)$.

\item
Using the terminology of equation \eqref{eq1}, we have $C_{\mathrm{br}}(q)\cdot H_{\infty}=\mathrm{ord}_p(\pi^*u)$.
\end{enumerate}

We have the following lemma.

\begin{lemma}\label{fanjie}
Suppose the curve $C\subseteq\mathbb{A}^N$ is not a vertical line. Let $C_{\mathrm{br}}(q)$ be a branch of $C$ at infinity. Then we have
$$
r(C_{\mathrm{br}}(q))\cdot(C_{\mathrm{br}}(q)\cdot H_{\infty})\leq\deg\overline{C}
$$
where $\deg\overline{C}$ stands for the degree of $\overline{C}$ in $\mathbb{P}^N$.
\end{lemma}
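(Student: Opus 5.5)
The plan is to reduce the statement to a single intersection-number estimate, using the description \eqref{eq1} of the logarithmic slope together with Definition \ref{locintnum}. As in Notation \ref{notation2}, we assume $q=[0,0,\dots,0,1]$ and work in the chart $U_N$ with coordinates $u,z_1,\dots,z_{N-1}$. Let $\pi\colon\widetilde{\overline{C}}\to\overline{C}$ be the normalization and let $p\in\pi^{-1}(q)$ be the point corresponding to $C_{\mathrm{br}}(q)$. By \eqref{eq1} and interpretation (ii) of $C_{\mathrm{br}}(q)\cdot H_{\infty}$, we have $C_{\mathrm{br}}(q)\cdot H_{\infty}=\mathrm{ord}_p(\pi^*u)$. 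Hence the left-hand side of the desired inequality equals
$$
r(C_{\mathrm{br}}(q))\cdot\mathrm{ord}_p(\pi^*u)=\min\limits_{1\leq i\leq N-1}\mathrm{ord}_p(\pi^*z_i).
$$
It therefore suffices to find a single index $i_0\in\{1,\dots,N-1\}$ with $\mathrm{ord}_p(\pi^*z_{i_0})\leq\deg\overline{C}$.

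\emph{Step 1: using that $C$ is not a vertical line.} The vertical line through $q$ is the line joining $O=[1,0,\dots,0]$ and $q$, namely $\{z_1=\dots=z_{N-1}=0\}$. If all the linear forms $z_1,\dots,z_{N-1}$ vanished identically on $\overline{C}$, then $\overline{C}$ would be contained in this line. Being an irreducible curve, it would then equal the line, so $C$ would be the vertical line, contrary to the hypothesis. Hence there is some $i_0\leq N-1$ such that the homogeneous linear form $z_{i_0}$, viewed as a global section of $\mathcal{O}(1)$, does not vanish on $\overline{C}$.

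\emph{Step 2: the intersection-number bound.} By Definition \ref{locintnum} and the remark following it, the section $\pi^*z_{i_0}$ of $\pi^*\mathcal{O}(1)$ has an effective divisor on $\widetilde{\overline{C}}$ of total degree $\mathcal{O}(1)\cdot\overline{C}=\deg\overline{C}$. Its coefficient at $p$ is therefore at most $\deg\overline{C}$. The next point is to identify this coefficient with $\mathrm{ord}_p(\pi^*z_{i_0})$, where $z_{i_0}$ now denotes the affine coordinate $z_{i_0}/z_N$ of $U_N$. This holds because $z_N$ does not vanish at $q$, so dividing by it does not change the vanishing order at $p$; this is the same tautological comparison as in Lemma \ref{tautlem}. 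Combining with the reduction above gives
$$
r(C_{\mathrm{br}}(q))\cdot(C_{\mathrm{br}}(q)\cdot H_{\infty})\leq\mathrm{ord}_p(\pi^*z_{i_0})\leq\deg\overline{C}.
$$

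There is essentially no hard step here. The only point requiring care is the identification \eqref{eq1} of $r(C_{\mathrm{br}}(q))$ with the ratio of orders of vanishing, which the Appendix itself treats heuristically. I would take \eqref{eq1} as the working definition of $r$, so the argument is rigorous relative to it. I would also note that the non-vertical hypothesis is exactly what ensures the minimum over $i$ is finite, so that the left-hand side is not $+\infty$.
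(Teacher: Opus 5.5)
Your proof is correct and follows the paper's argument essentially step for step: the left-hand side is the numerator $\min_i \mathrm{ord}_p(\pi^*z_i)$ of \eqref{eq1}, the non-vertical hypothesis gives some $z_{i_0}$ with finite local intersection number, and that number is at most $\mathcal{O}(1)\cdot\overline{C}=\deg\overline{C}$. Your additional details only make explicit what the paper leaves implicit, namely identifying the vertical line through $q$ as $\{z_1=\dots=z_{N-1}=0\}$ and matching the vanishing order of the section $\pi^*z_{i_0}$ with that of $z_{i_0}/z_N$ via Lemma \ref{tautlem}.
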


This lemma is not hard to comprehend. Indeed, the left-hand-side is the numerator in \eqref{eq1}, and hence is the minimum of the intersection numbers $C_{\mathrm{br}}(q)\cdot\mathrm{div}(z_i)$ for $1\leq i\leq N-1$. Here $z_1,\dots,z_{N-1}$ are regarded as global sections of the line bundle $\mathcal{O}(1)$ on $\mathbb{P}^N$. Since $C$ is not a vertical line, these numbers cannot all be $+\infty$. But every finite intersection number $C_{\mathrm{br}}(q)\cdot\mathrm{div}(z_i)$ cannot be greater than $\mathcal{O}(1)\cdot\overline{C}=\deg\overline{C}$. In the setting of \ref{interpret}(i), we can make this explanation down-to-earth by ``reversely solve that $z_i$ from $u$ and write the Puiseux series with independent variable $z_i$".

Now the conclusion of Proposition \ref{fundineq} easily follows by summing the inequality
$$
\frac{C_{\mathrm{br}}(q)\cdot H_{\infty}}{\deg\overline{C}}\leq\frac{1}{r(C_{\mathrm{br}}(q))}
$$
along with the branches at infinity.

After making the arguments up to now rigorous, we can get another geometric way to prove the results in \cite{YZ}.

\subsection{An explanation about \cite[Remark 1.9(i)]{YZ}}

In \cite[Example 1.8(ii)]{YZ}, we consider ``optimal examples" in the setting of loc. cit.. We clarify this notion in the following definition. Please see the Introduction of \cite{YZ} to get the reason why we call these examples ``optimal".

\begin{definition}\label{optex}
Let $f$ be a regular endomorphism of $\mathbb{A}^N$ with degree gap $k$. Let $C\subseteq\mathbb{A}^N$ be an $f$-invariant irreducible closed curve that is not a vertical line. If $k=2\max\limits_{P\in\overline{C}\cap H_{\infty}}e_{f_{\infty}}(P)$, then we say that $(f,C)$ is an \emph{optimal example}.
\end{definition}

In \cite[Remark 1.9(i)]{YZ}, we claim that \cite[Example 1.8(ii)]{YZ} essentially gives all such optimal examples. In this final subsection, we try to use the knowledge in this Appendix to illustrate that remark.

Firstly, we know that $C$ is a Siegel curve. According to Proposition \ref{triineq}, we can see that $r(C_{\mathrm{br}}(q))\geq2$ for every branch of $C$ at infinity. Then Proposition \ref{fundineq} tells us that $C$ has exactly two branches at infinity and the logarithmic slopes of these two branches are both exactly $2$. In particular, the normalization of $C$ is isomorphic to $\mathbb{G}_m$.

Next, we give a down-to-earth description about the logarithmic slopes of branches of Siegel curves.

\begin{lemma}\label{siegelslope}
Let $C\subseteq\mathbb{A}^N$ be a Siegel curve which is not a vertical line.
\begin{enumerate}
\item
Suppose the normalization of $C$ is isomorphic to $\mathbb{A}^1$. Let $\pi\colon\mathbb{A}^1\to\mathbb{A}^N$ be the composition of the normalization map with the closed immersion and write down its formula
$$
\pi(t)=\sum\limits_{j=0}^{a}v_jt^j,\quad v_0,\dots,v_a\in\mathbb{C}^N
$$
where $a>0$ and $v_a\neq0$. Put $b=\max\{j<a\mid v_j\notin\mathbb{C}v_a\}$ (notice that such an element exists). Then the logarithmic slope of the unique branch of $C$ is $\frac{a-b}{a}$.

\item
Suppose the normalization of $C$ is isomorphic to $\mathbb{G}_m$. Let $\pi\colon\mathbb{G}_m\to\mathbb{A}^N$ be the composition of the normalization map with the closed immersion and write down its formula
$$
\pi(t)=\sum\limits_{j=m}^{n}v_jt^j,\quad v_m,v_{m+1},\dots,v_n\in\mathbb{C}^N
$$
where $m<0<n$ and the vectors $v_m$ and $v_n$ are nonzero. Put
$$
b_{+}=\max\{j<n\mid v_j\notin\mathbb{C}v_n\}\quad\text{and}\quad b_{-}=\min\{j>m\mid v_j\notin\mathbb{C}v_m\}.
$$
These elements exist by the same reason as above. Then the logarithmic slopes of the two branches of $C$ are $\frac{n-b_{+}}{n}$ and $\frac{b_{-}-m}{-m}$.
\end{enumerate}
\end{lemma}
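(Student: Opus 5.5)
The plan is to compute with the description \eqref{eq1} of the logarithmic slope as a ratio of orders of vanishing on the normalization, after a convenient linear change of coordinates. Throughout, I take \eqref{eq1} as the working definition of $r(C_{\mathrm{br}}(q))$, in the same heuristic spirit as the rest of this Appendix.

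\emph{Step 1: the point at infinity and the coordinate change.} For part (i), the normalization $\mathbb{A}^1\to C$ extends to $\mathbb{P}^1\to\overline{C}$, and the unique branch at infinity corresponds to $p=\infty$. Put $s=1/t$ and multiply the homogeneous coordinates by $s^a$:
$$
[1,\pi(t)]=[s^a,\ v_a+v_{a-1}s+\dots+v_0s^a].
$$
Hence $q=[0,v_a]\in H_{\infty}$. A linear automorphism $M\in\mathrm{GL}_N(\mathbb{C})$ of $\mathbb{A}^N$ extends to $\mathbb{P}^N$, preserves $H_{\infty}$ and the origin $O$, and commutes with the central projection. It distorts the distances in Definition \ref{logslope} only by bounded factors, so it does not change $r$. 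It also replaces each $v_j$ by $Mv_j$, which preserves the condition $v_j\notin\mathbb{C}v_a$. So we may assume $v_a=(0,\dots,0,1)$, that is, $q=[0,\dots,0,1]$, and use the coordinates $u,z_1,\dots,z_{N-1}$ of Notation \ref{notation2}.

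\emph{Step 2: the orders of vanishing.} On the chart $U_N$, the $N$-th homogeneous coordinate $1+v_{a-1,N}s+\dots$ is a unit at $s=0$. Therefore
$$
\mathrm{ord}_p(\pi^*u)=a,\qquad \mathrm{ord}_p(\pi^*z_i)=\mathrm{ord}_s\Big(\sum_{j<a}v_{j,i}s^{a-j}\Big)\quad(1\leq i\leq N-1),
$$
where $v_{j,i}$ is the $i$-th component of $v_j$. Taking the minimum over $i$, we get $\min_i\mathrm{ord}_p(\pi^*z_i)=a-b$. Here $b$ is the largest $j<a$ such that some component $v_{j,i}$ with $i<N$ is nonzero, which means exactly $v_j\notin\mathbb{C}v_a$.

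\emph{Step 3: existence of $b$.} Suppose no such $j$ exists. Then every $v_j$ lies in $\mathbb{C}v_a$, so $C$ is contained in the line $\mathbb{C}v_a$ through $O$. That is, $C$ is a vertical line, which is excluded. Then \eqref{eq1} gives $r=(a-b)/a$.

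\emph{Step 4: part (ii).} The same computation applies at each of the two branches, which correspond to $t=\infty$ and $t=0$.
\begin{itemize}
\item At $t=\infty$, multiply by $s^n$ with $s=1/t$. Then $\mathrm{ord}(\pi^*u)=n$, the point at infinity is $[0,v_n]$, and we obtain $(n-b_+)/n$.
\item At $t=0$, use the local parameter $t$ itself and multiply by $t^{-m}$. Then $\mathrm{ord}(\pi^*u)=-m$ and the point is $[0,v_m]$. After normalizing $v_m=(0,\dots,0,1)$ by a linear change, the minimal order among the $z_i$ is $b_--m$, so the slope is $(b_--m)/(-m)$.
\end{itemize}
The existence of $b_\pm$ follows by the same vertical-line argument as in Step 3.

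The main obstacle is not the computation. It is justifying that $r$ is invariant under the linear change of coordinates and agrees with \eqref{eq1}. I would argue this directly from the Puiseux description in Interpretation \ref{interpret}(i), noting that $M$ changes the minimal exponent of the Puiseux series by nothing.
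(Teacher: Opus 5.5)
Your argument is correct and is essentially the verification the paper has in mind. The paper only says the lemma can be checked from Definition \ref{logslope}, and after moving $v_a$ (resp.\ $v_n$, $v_m$) to $(0,\dots,0,1)$ the exponents you compute, namely $a$ and $a-b$ (resp.\ $n$, $n-b_{+}$ and $-m$, $b_{-}-m$), are exactly what that check reads off.

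The one thing to tighten is that \eqref{eq1} is itself only offered heuristically in the paper. You can skip it by substituting the parametrization directly into Definition \ref{logslope}: near $q$ one has $\mathrm{dist}(x,H_{\infty})\asymp|u|\asymp|s|^{a}$, and the distance from the central projection of $x$ to $q$ is $\asymp\max_{i<N}|z_i|\asymp|s|^{a-b}$, so the ratio of logarithms tends to $(a-b)/a$. Combined with your bounded-distortion remark for the linear change, this makes the proof self-contained.
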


One can check this lemma by Definition \ref{logslope}.

Using Lemma \ref{siegelslope}(ii), we can see that the parametrization of the optimal curve $C$ in Definition \ref{optex} has a very restrictive form. More precisely, only the terms $t$ and $t^{-1}$ can appear in the parametrization $\pi(t)$ of $C$. After implementing a linear transformation of $\mathbb{A}^N$, we may assume that $C$ is the curve $\{x_1x_2=1,\ x_3=\cdots=x_N=0\}$ as in \cite[Example 1.8(ii)]{YZ}, i.e. $C$ admits the parametrization $(t,t^{-1},0,\dots,0)$.

Then the formula of $f$ is not hard to determine. We can see that there exist $c\in\mathbb{C}^{\times}$ and a choice of sign such that
$$
f(t,t^{-1},0,\dots,0)=(ct^{\pm d},c^{-1}t^{\mp d},0,\dots,0)
$$
holds. The choice of $ct^d$ and $c^{-1}t^{-d}$ gives exactly the formula in \cite[Example 1.8(ii)]{YZ} and the other choice gives an exactly similar formula.

\section*{Acknowledgements}
We are grateful to our advisor, Junyi Xie, for suggesting us consider the DML(1) property of regular endomorphisms.

This work is supported by the National Natural Science Foundation of China Grant No. 12271007.

\bibliographystyle{alpha}
\bibliography{reference}

@book{BGT16,
    title={The Dynamical Mordell--Lang Conjecture},
    author={J. P. Bell and D. Ghioca and T. J. Tucker},
    publisher={American Mathematical Society},
    year={2016},
    series={Mathematical Surveys and Monographs},
    volume={\textbf{210}},
    address={Providence, RI}
}

@article{Xie17,
    title={The dynamical {M}ordell--{L}ang conjecture for polynomial endomorphisms of the affine plane},
    author={J. Xie},
    journal={Ast\'erisque},
    year={2017},
    volume={\textbf{394}},
    pages={vi+110}
}

@incollection{Xie,
    title={Around the dynamical {M}ordell--{L}ang conjecture},
    author={Junyi Xie},
    BOOKTITLE = {Algebraic, Complex, and Arithmetic Dynamics},
    SERIES = {Simons Symposia},
    PAGES = {59--98},
    PUBLISHER = {Springer Cham},
    YEAR = {2026}
}

@article{Bel06,
    title={A generalized {S}kolem--{M}ahler--{L}ech theorem for affine varieties},
    author={J. P. Bell},
    journal={J. London Math. Soc. (2)},
    year={2006},
    volume={\textbf{73}},
    number={2},
    pages={367--379}
}

@article{BGT10,
    title={The dynamical {M}ordell--{L}ang problem for \'etale maps},
    author={J. P. Bell and D. Ghioca and T. J. Tucker},
    journal={Amer. J. Math.},
    volume={\textbf{132}},
    number={6},
    year={2010},
    pages={1655--1675}
}

@article{GT09,
    title = {Periodic points, linearizing maps, and the dynamical {M}ordell--{L}ang problem},
    author = {D. Ghioca and T.J. Tucker},
    journal = {J. Number Theory},
    volume = {\textbf{129}},
    number = {6},
    pages = {1392--1403},
    year = {2009}
}

@article{Xie24,
    title={Algebraicity criteria, invariant subvarieties and transcendence problems from arithmetic dynamics},
    author={J. Xie},
    journal={Peking Math. J.},
    volume={\textbf{7}},
    number={1},
    pages={345--398},
    year={2024}
}

@unpublished{Zhong,
    title={Polynomial endomorphisms of $\mathbb{A}^2$ with many periodic curves},
    author={X. Zhong},
    note={arXiv:2508.13873v2}
}

@article{DFR,
    title={On the dynamical {M}anin--{M}umford conjecture for plane polynomial maps},
    author={R. Dujardin and C. Favre and M. Ruggiero},
    journal={J. Eur. Math. Soc. (JEMS)},
    year={2026},
    note={https://doi.org/10.4171/jems/1807}
}

@unpublished{JXZ,
    title={Cyclotomic integral points for affine dynamics},
    author={Z. Ji and J. Xie and G.-R. Zhang},
    note={arXiv:2511.13443v2}
}

@article{BJ00,
    title={Dynamics of regular polynomial endomorphisms of $\mathbb{C}^k$},
    author={E. Bedford and M. Jonsson},
    journal={Amer. J. Math.},
    volume={\textbf{122}},
    number={1},
    pages={153--212},
    year={2000}
}

@unpublished{YZ,
    title={Local height arguments toward the dynamical {M}ordell--{L}ang conjecture},
    author={S. Yang and A. Zheng},
    note={arXiv:2605.11676v2}
}

@misc{Stacks,
    title={The {S}tacks project},
    author={Authors Stacks},
    note={https://stacks.math.columbia.edu},
    year={2026}
}

@unpublished{YZb,
    title={On the {DML}(1) property for regular endomorphisms of affine spaces: the $\mathbb{G}_m$-case},
    author={S. Yang and A. Zheng},
    note={arXiv:2609.29107}
}

@book{Har77,
    title={Algebraic Geometry},
    author={R. Hartshorne},
    publisher={Springer-Verlag},
    year={1977},
    series={Graduate Texts in Mathematics},
    volume={\textbf{52}},
    address={New York}
}

@unpublished{net,
    title={Sequences and nets in topology},
    author={S. Vermeeren},
    note={arXiv:1006.4472}
}

@article{BFJ08,
    title={Valuations and plurisubharmonic singularities},
    author={S. Boucksom and C. Favre and M. Jonsson},
    journal={Publ. Res. Inst. Math. Sci.},
    volume={\textbf{44}},
    number={2},
    pages={449--494},
    year={2008}
}

@article{ELS03,
    title={Uniform approximation of {A}bhyankar valuation ideals in smooth function fields},
    author={L. Ein and R. Lazarsfeld and K. E. Smith},
    journal={Amer. J. Math.},
    volume={\textbf{125}},
    number={2},
    pages={409--440},
    year={2003}
}

\address{Beijing International Center for Mathematical Research, Peking University, Beijing 100871, China}

\email{ys-yx@pku.edu.cn}

~

\address{Beijing International Center for Mathematical Research, Peking University, Beijing 100871, China}

\email{zay@pku.edu.cn}

\end{spacing}
\end{document}